\DeclareMathOperator{\RE}{Re} \DeclareMathOperator{\IM}{Im}
\numberwithin{equation}{section}
\newtheorem{theorem}{Theorem}[section]
\newtheorem{lemma}[theorem]{Lemma}
\newtheorem{corollary}[theorem]{Corollary}
\newtheorem{example}[theorem]{Example}
\theoremstyle{remark}
\newtheorem{remark}[theorem]{Remark}
\begin{document}
\title[Sufficient conditions for Strong starlikeness]{Sufficient conditions for Strong starlikeness}

\dedicatory{Dedicated to Prof.\ Dato' Indera Rosihan M. Ali}

\author[K. Sharma]{Kanika sharma}
\address{Department of Mathematics, Atma Ram Sanatan Dharma College, University of Delhi, Delhi--110 021, India}
\email{kanika.divika@gmail.com}

\author[N. E. Cho]{Nak Eun Cho}
\address{Department of Applied Mathematics, Pukyong National University, Busan 608--737, South Korea}
\email{necho@pknu.ac.kr}

\author{V. Ravichandran}
\address{Department of Mathematics, National Institute of Technology, Tiruchirappalli--620 015, India} \email{ravic@nitt.edu; vravi68@gmail.com}

\begin{abstract}
Let $p$ be an analytic function defined on the open unit disc $\mathbb{D}$ with $p(0)=1$ and $0< \alpha \leq 1$. The conditions on complex valued functions $C$, $D$ and $E$ are obtained for $p$ to be subordinate to $((1+z)/(1-z))^{\alpha}$ when $C(z) z^{2}p''(z)+D(z)zp'(z) + E(z)p(z)=0$. Sufficient conditions for confluent (Kummer) hypergeometric function and generalized and normalized Bessel function of the first kind of complex order to be subordinate to $((1+z)/(1-z))^{\alpha}$ are obtained as applications. The conditions on $\alpha$ and $\beta$ are derived for $p$ to be subordinate to $((1+z)/(1-z))^{\alpha}$ when $1+\beta zp'(z)/p^{n}(z)$ with $n=1,2$ is subordinate to $1+4z/3+2z^{2}/3=:\varphi_{CAR}(z)$. Similar problems were investigated for $\RE p(z)>0$ when the functions $p(z)+\beta zp'(z)/p^{n}(z)$  with $n=0,2$ is subordinate to $\varphi_{CAR}(z)$. The condition on $\beta$ is determined for $p$ to be subordinate to $((1+z)/(1-z))^{\alpha}$ when $p(z)+\beta zp'(z)/p^{n}(z)$ with $n=0,1,2$ is subordinate to $((1+z)/(1-z))^{\alpha}$.
\end{abstract}
\keywords{Starlike functions, differential subordination, confluent hypergeometric function, bessel function}
\subjclass[2020]{30C80, 30C45}

\maketitle
\section{introduction}
For a natural number $n$, let $\mathcal{H}[a,n]$ be the class of all analytic functions $p$ defined on the open unit disc $\mathbb{D}$ of the form $p(z)=a+p_{n}z^{n}+p_{n+1}z^{n+1}+\cdots$. An analytic function $p \in \mathcal{H}[1,1]$ is a function with a positive real part if $\RE p(z)>0$ and  the class of all such functions is denoted by $\mathcal{P}$. Let $\mathcal{A}_n=\{zh : h \in \mathcal{H}[1,n]\}$ and let $\mathcal{A}:=\mathcal{A}_1$. Let $\mathcal{S}$ denote the subclass of $\mathcal{A}$ consisting of univalent (one-to-one) functions. For an analytic function $\varphi$ with $\varphi(0)=1$, let
\[ \mathcal{S}^*(\varphi) := \left\{ f\in\mathcal{A}:\frac{ zf'(z)}{f(z)}\prec\varphi(z)\right\}.\]
This class unifies various classes of starlike functions when $\RE \varphi>0$. Shanmugam \cite{shan} studied the convolution properties of this class when $\varphi$ is convex while Ma and Minda \cite{mamin2} investigated the growth, distortion and coefficient estimates under less restrictive assumption that $\varphi$ is starlike and $\varphi(\mathbb{D})$ is symmetric with respect to the real axis. For $0\leq \alpha <1$, the class $\mathcal{S}^*((1+(1-2\alpha)z)/(1-z))=:\mathcal{S}^*(\alpha)$ is the class of starlike functions of order $\alpha$, introduced by Robertson \cite{rob}. The class $\mathcal{S}^*:=\mathcal{S}^*(0)$ is the familiar class of starlike functions. For $0<\alpha\leq1,$ $\mathcal{S}^*[\alpha]:=\mathcal{S}^*(((1+z)/(1-z))^{\alpha})$ is the class of strongly starlike functions of order $\alpha$. For $0\leq \beta<1$, a function $f \in \mathcal{A}$ is said to be close-to-convex of order $\beta$ \cite{ural,goodman} if $\RE(zf'(z)/g(z))>\beta$ for some $g\in \mathcal{S}^*$ (in general, $g$ need not be normalized but we add this extra normalization). More results regarding these classes can be found in \cite{cho,lee}. Recently, the authors have investigated the sufficient conditions for a function to belong to various subclasses of starlike functions in \cite{sharma1,sharma2,sharma}. The class $S^*_C:=\mathcal{S}^*(\varphi_{CAR})$, where $\varphi_{CAR}(z)=1+4z/3+2z^{2}/3$ was introduced and studied recently in \cite{jain,sharma,sharma2}. Precisely, $f\in S^*_C$ provided $zf'(z)/f(z)$ lies in the region bounded by the cardioid
\[ \Omega_C :=\left\{w=u+iv: (9 u^2+9 v^2-18u+5)^2- 16 (9 u^2+9 v^2- 6u+1)=0\right\}. \]
The   functions
\[f_{1}(z)=\frac{4z}{(2-z)^{2}}\quad\text{and}\quad f_{2}(z)=z\left(1+ \frac{z}{4}\right)^{2}\]
belong to the  class $S^*_C$. The image of the unit disc under the function $q_{i}(z):=zf_{i}'(z)/f_{i}(z),$ $(i=1,2)$ is contained inside the cardioid $\Omega_C$ (see Figure \ref{p3fig1}).

 \begin{figure}[htb!]
 \centering
 {\includegraphics[width=.3\linewidth]{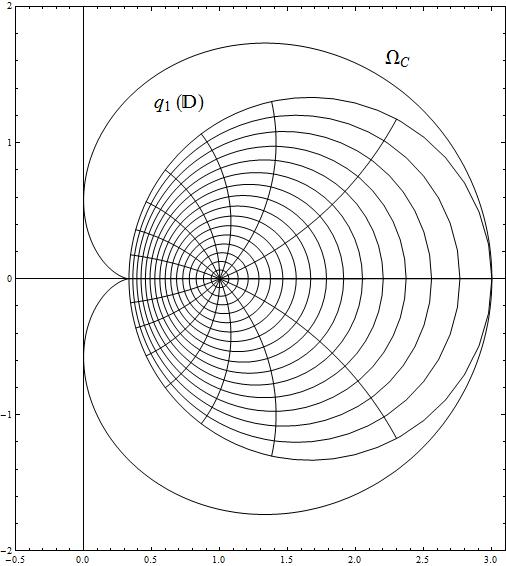}}
 \hspace{2cm}
 {\includegraphics[width=.3\linewidth]{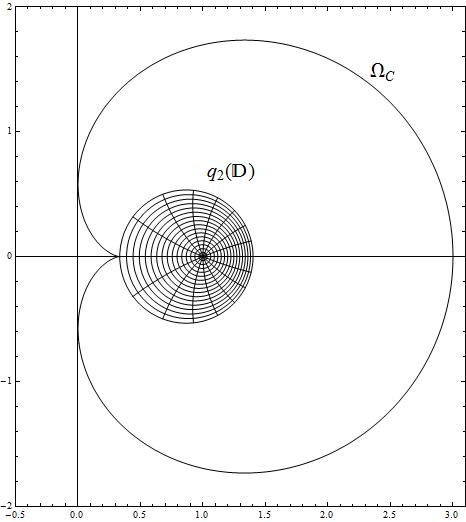}}
 \caption{Graph showing $q_i(\mathbb{D})\subset \Omega_C$ ($i=1,2$).}\label{p3fig1}
 \end{figure}
In Section ~\ref{p6sec2}, we investigate the conditions on complex valued functions $C, D$ and $E$ so that $C(z) z^{2}p''(z)+D(z)zp'(z) + E(z)p(z)=0$ implies that $p(z)\prec ((1+z)/(1-z))^{\alpha}$, $(0<\alpha\leq 1)$. As an application, we obtain sufficient conditions for generalized and normalized Bessel function of the first kind of complex order and confluent (Kummer) hypergeometric function to be subordinate to  $((1+z)/(1-z))^{\alpha}$. These results are the extension of \cite[Theorem 2.2, p.\ 29]{baricz1} for $|\arg(p(z))|<\alpha \pi/2$.  As an application, we also deduce some conditions on the  functions $f \in \mathcal{A}$, $g \in \mathcal{H}[1,1]$ so that their product $fg \in \mathcal{S}^*[\alpha]$. We also obtained the sufficient conditions for $h \in \mathcal{A}_n$ to belong to the class $\mathcal{S}^*[\alpha]$.

A convex function is starlike of order $1/2;$ analytically, $p(z)+z p'(z)/p(z)\prec (1+z)/(1-z)$ implies that $p(z)\prec 1/(1-z)$. Similarly, a sufficient condition for a function $p$ to be a function with positive real part is that $p(z)+zp'(z)/p(z)\prec R(z)$, where $R$ is the open door mapping given by $R(z):=(1+z^2+4z)/(1-z^{2})$.
Several authors have investigated similar results for functions to belong to certain regions in right half plane. For example, Ali \textit{et al.}\cite{ali2} determined the condition on $\beta$ for $p(z)\prec \sqrt{1+z}$ when $1+\beta zp'(z)/p^{n}(z)$ with $n=0,1,2$ or $(1-\beta)p(z)+\beta p^{2}(z)+\beta zp'(z)$ is subordinate to $\sqrt{1+z}$. For related results, see \cite{ali1,ali2,pap,sokol09b,sharma,sharma2,vir}. We investigate a similar problem for regions that were considered recently by many authors. In Section ~\ref{p6sec3}, we determine conditions on $\alpha$ and $\beta$ so that $p(z)\prec ((1+z)/(1-z))^{\alpha}, (0<\alpha\leq1)$ when $1+\beta zp'(z)/p(z)$ or $1+\beta zp'(z)/p^{2}(z)\prec \varphi_{CAR}(z)$. Condition on $\beta$ is also determined so that $p(z)+\beta zp'(z)$ or $p(z)+\beta zp'(z)/p^{2}(z)\prec\varphi_{CAR}(z)$ implies $p(z)\prec(1+z)/(1-z)$. For $0<\alpha\leq1$, we also find conditions on $\alpha$ and $\beta$ so that $p(z)+\beta z p'(z)/p^{k}(z)$, $(k=0,1,2)$ is subordinate to $((1+z)/(1-z))^{\alpha}$ implies $p(z)\prec ((1+z)/(1-z))^{\alpha}$. As an application of our results, we give sufficient conditions for $f\in \mathcal{A}$ to belong to the various subclasses of starlike functions.

The following results are required in our investigation.

\begin{lemma}\cite[Theorem 2.3i, p.35]{ural}\label{p6lem1}
Let $\Omega\subset \mathbb{C}$ and suppose that $\psi: {\mathbb{C}}^{3} \times \mathbb{D} \to \mathbb{C}$ satisfies the condition $\psi(i\rho, \sigma, \mu + i \nu; z) \notin \Omega$ whenever $\rho,$ $\sigma$, $\mu$ and $\nu$ are real numbers, $\sigma \leq -n(1+\rho^2)/2$, $\mu+\sigma \leq 0$. If $p \in \mathcal{H}[1,n]$ and $\psi(p(z),zp'(z),z^{2}p''(z);z)\in \Omega$ for $z \in \mathbb{D}$, then $\RE p(z)>0$ in $\mathbb{D}$.
\end{lemma}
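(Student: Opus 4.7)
The natural approach is a contradiction argument combined with the Miller--Mocanu admissibility principle. First I would assume the conclusion fails, so that there exists a point in $\mathbb{D}$ at which $\RE p$ is non-positive. Since $p(0)=1$ and $p$ is continuous, the connected component of $\{z\in\mathbb{D}:\RE p(z)>0\}$ containing the origin has a boundary inside $\mathbb{D}$, and I can pick a point $z_0$ of smallest modulus on this boundary: so $|z_0|=r_0<1$, $\RE p(z_0)=0$, and $\RE p(z)>0$ for $|z|<r_0$. Write $p(z_0)=i\rho$ with $\rho\in\mathbb{R}$.

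The heart of the proof is to show that at this point the triple $(p(z_0),z_0 p'(z_0),z_0^2 p''(z_0))$ is of the form $(i\rho,\sigma,\mu+i\nu)$ with $\sigma\le -n(1+\rho^2)/2$ and $\mu+\sigma\le 0$. I would establish this by composing $p$ with a Cayley-type map to produce a Schwarz function. Let $w(z)=(1-p(z))/(1+p(z))$; then $w$ is analytic on $|z|<r_0$, has a zero of order at least $n$ at the origin (because $p\in\mathcal{H}[1,n]$), satisfies $|w(z)|<1$ there, and satisfies $|w(z_0)|=1$. Jack's lemma yields $z_0 w'(z_0)=m\,w(z_0)$ for some real $m\ge n$. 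Unwinding this identity through the inverse relation $p=(1-w)/(1+w)$ produces $z_0 p'(z_0)=\sigma\in\mathbb{R}$ together with the exact bound $\sigma\le -n(1+\rho^2)/2$. Applying the same idea to the second derivative---equivalently, using that $r\mapsto|w(re^{i\arg z_0})|^2$ attains its maximum on $[0,r_0]$ at $r=r_0$ and therefore has non-positive second radial derivative there---gives the companion inequality $\mu+\sigma\le 0$ for the real part of $z_0^2 p''(z_0)$.

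With this data in hand the proof closes at once. On the one hand, $\psi(p(z_0),z_0 p'(z_0),z_0^2 p''(z_0);z_0)\in\Omega$ by hypothesis. On the other hand, the admissibility condition on $\psi$ asserts that $\psi(i\rho,\sigma,\mu+i\nu;z_0)\notin\Omega$ for exactly the triples produced above. Since the two values coincide, this contradiction rules out the initial assumption and forces $\RE p(z)>0$ on $\mathbb{D}$.

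The principal obstacle is the second-order step. The first-order bound $\sigma\le -n(1+\rho^2)/2$ is a direct transcription of Jack's lemma via the Schwarz function $w$, but extracting the inequality $\mu+\sigma\le 0$ requires differentiating the Jack-type relation $zw'(z)=mw(z)$ once more (or, equivalently, computing the second radial derivative of $|w|^2$ at a boundary maximum) and then keeping careful track of real parts after translating everything back from $w$ to $p$. The sign bookkeeping and the reappearance of the $(1+\rho^2)$ factor in this second-order computation are the most delicate part of the argument; the rest of the proof is a routine contradiction.
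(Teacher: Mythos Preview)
The paper does not prove this lemma; it is quoted verbatim from Miller--Mocanu's monograph (Theorem~2.3i there) and used as a black box throughout Section~\ref{p6sec2}. So there is no in-paper argument to compare against.

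Your sketch is essentially the standard Miller--Mocanu proof: pass to the Schwarz function $w=(1-p)/(1+p)$, locate a boundary point $z_0$ where $|w(z_0)|=1$, apply the Jack--Miller--Mocanu lemma to get $z_0w'(z_0)=m\,w(z_0)$ with $m\ge n$, and translate back to obtain $\sigma=-m(1+\rho^2)/2$. That part is fine. The one inaccuracy is your mechanism for the second-order inequality. Neither of the two routes you name actually works as stated: the identity $zw'(z)=m\,w(z)$ holds only at the single point $z_0$, so it cannot be ``differentiated once more''; and an endpoint maximum of $r\mapsto|w(re^{i\arg z_0})|^2$ on $[0,r_0]$ yields only a one-sided first-derivative condition, not $F''\le0$. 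The correct source of the second-order bound is the \emph{tangential} second derivative of $|w|^2$ along the circle $|z|=|z_0|$ (where $z_0$ is a genuine interior maximum), which gives $\RE\bigl(1+z_0w''(z_0)/w'(z_0)\bigr)\ge m$; converting this back through $p=(1-w)/(1+w)$ and using that $z_0p'(z_0)=\sigma<0$ is real then yields $\mu+\sigma\le0$. With that correction the contradiction closes exactly as you describe.
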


\begin{lemma}\cite[Theorem 3.4i, p.134]{ural}\label{p6int2}
Let $q$ be univalent in $\mathbb{D}$ and let $\varphi$ and $\nu$  be analytic in a domain $D$ containing $q(\mathbb{D})$ with $\varphi(w)\neq 0$ when $w\in q(\mathbb{D})$. Set
$ Q(z):=zq'(z)\varphi(q(z))$, $ h(z):=\nu(q(z))+Q(z)$.
Suppose that (i) either $h$ is convex or $Q(z)$ is starlike univalent in $\mathbb{D}$ and
 (ii) $\RE({zh'(z)}/{Q(z)})>0$ for $z\in\mathbb{D}$.
If $p$ is analytic in $\mathbb{D}$, $p(0)=q(0)$ and satisfies
\begin{equation}\label{p2eq1.1}
\nu(p(z))+zp'(z)\varphi(p(z)) \prec \nu(q(z))+zq'(z)\varphi(q(z)),
\end{equation}
then $p\prec q$ and $q$ is the best dominant.
\end{lemma}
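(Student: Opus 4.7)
The plan is to argue by contradiction using the boundary-point technique of Miller and Mocanu. Suppose $p \not\prec q$. Since $p(0)=q(0)$ and $q$ is univalent, a standard continuity argument---first applied to the radial dilations $q_r(z)=q(rz)$ and then letting $r\to 1^{-}$---produces points $z_{0}\in\mathbb{D}$ and $\zeta_{0}\in\partial\mathbb{D}$ with $p(z_{0})=q(\zeta_{0})$ and $p(\{|z|<|z_{0}|\})\subset q(\mathbb{D})$. The Miller--Mocanu boundary lemma then supplies a real number $m\geq 1$ such that
\[
z_{0}\,p'(z_{0}) \;=\; m\,\zeta_{0}\,q'(\zeta_{0}).
\]

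Substituting into the left-hand side of \eqref{p2eq1.1} gives
\[
\nu(p(z_{0}))+z_{0}p'(z_{0})\varphi(p(z_{0})) \;=\; \nu(q(\zeta_{0}))+m\,\zeta_{0}q'(\zeta_{0})\varphi(q(\zeta_{0})) \;=\; h(\zeta_{0})+(m-1)Q(\zeta_{0}).
\]
The assumed subordination \eqref{p2eq1.1} forces this value to lie in $h(\mathbb{D})$, so I must derive a contradiction by showing that $h(\zeta_{0})+(m-1)Q(\zeta_{0})\notin h(\mathbb{D})$ for every $m\geq 1$. This non-containment is the geometric heart of the proof.

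Here hypotheses (i) and (ii) enter. Condition (ii) together with a Carath\'eodory-type boundary argument yields $\RE(\zeta_{0}h'(\zeta_{0})/Q(\zeta_{0}))\geq 0$, so $Q(\zeta_{0})$ has non-negative component along $\zeta_{0}h'(\zeta_{0})$, which is the outward normal to $\partial h(\mathbb{D})$ at $h(\zeta_{0})$. If $h$ is convex, the supporting-line property of $h(\mathbb{D})$ at the boundary point $h(\zeta_{0})$ instantly places $h(\zeta_{0})+(m-1)Q(\zeta_{0})$ outside $h(\mathbb{D})$ for $m\geq 1$. If instead $Q$ is starlike univalent, condition (ii) reads as $\RE(h'(\zeta_{0})/Q'(\zeta_{0})\cdot\zeta_{0}Q'(\zeta_{0})/Q(\zeta_{0}))\geq 0$, which is precisely the statement that $h$ is close-to-convex with respect to $Q$; a direct argument using the starlikeness of $Q(\mathbb{D})$ with respect to the origin then again excludes the perturbed point.

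The main obstacle I anticipate is the rigorous boundary analysis: justifying the existence of the extremal contact point $\zeta_{0}$ (via the radial-approximation device, since $q$ is only assumed univalent in $\mathbb{D}$ and need not extend continuously to $\partial\mathbb{D}$), and upgrading (ii) to the normal-direction inequality at $\zeta_{0}$ while handling the regularity of $q$ carefully. Once the boundary step is in place, the geometric contradiction closes the argument, and the best-dominant assertion follows automatically: the choice $p=q$ satisfies \eqref{p2eq1.1} with equality, so any other dominant of the left-hand side must in particular dominate $q$.
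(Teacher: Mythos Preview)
The paper does not prove this lemma at all: it is quoted verbatim as Theorem~3.4i from Miller and Mocanu's monograph \cite{ural} and used as a black box, so there is no ``paper's own proof'' to compare against. Your sketch is essentially the argument given in the original source: the Miller--Mocanu boundary-point lemma (Lemma~2.2d in \cite{ural}) produces $z_0\in\mathbb{D}$, $\zeta_0\in\partial\mathbb{D}$, and $m\ge 1$ with $p(z_0)=q(\zeta_0)$ and $z_0p'(z_0)=m\zeta_0 q'(\zeta_0)$, whence $\nu(p(z_0))+z_0p'(z_0)\varphi(p(z_0))=h(\zeta_0)+(m-1)Q(\zeta_0)$, and hypotheses (i)--(ii) are used to show this value lies outside $h(\mathbb{D})$.

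One caution on your write-up: the way Miller and Mocanu actually close the geometric step is not quite via a ``supporting-line'' or ``outward-normal'' argument as you phrase it. They first prove the auxiliary subordination lemma (Lemma~3.4e in \cite{ural}) that if $Q$ is starlike and $\RE(zh'(z)/Q(z))>0$ then $h$ is univalent and, crucially, $h(\zeta_0)+(m-1)Q(\zeta_0)\notin h(\mathbb{D})$ for all $|\zeta_0|=1$ and $m\ge 1$; the convex-$h$ case is handled separately by showing convexity of $h$ already forces $Q$ starlike. Your heuristic about the normal direction is morally right but would need care to make rigorous (e.g.\ $h$ need not extend smoothly to $\partial\mathbb{D}$), and the radial-approximation device you mention is indeed what is used to sidestep boundary regularity. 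So your plan is sound and matches the original, but when you flesh it out you should route the exclusion step through the starlikeness of $Q$ rather than through a tangent-line picture.
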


\begin{lemma} \cite[Theorem 3.4a, p.120]{ural}\label{p6int3}
Let $q$ be analytic in $\mathbb{D}$ and $\phi$ be analytic in a domain $D$ containing $q(\mathbb{D})$ and suppose that
(i)  $\RE\phi(q(z))> 0$ and either (ii)  $q$ is convex, or (iii)  $Q(z)=zq'(z)\phi(q(z))$ is starlike.
If $p$ is analytic in $\mathbb{D}$, $p(0)=q(0)$, $p(\mathbb{D})\subset D$ and
$p(z)+zp'(z)\phi(p(z)) \prec q(z)$,
then $p\prec q$.
\end{lemma}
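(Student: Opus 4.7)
The plan is to apply the standard Jack--Miller--Mocanu admissibility method by contradiction. Set $\psi(r,s):=r+s\,\phi(r)$, so that the hypothesis reads $\psi(p(z),zp'(z))\prec q(z)$. Suppose, toward a contradiction, that $p\not\prec q$. Since $p(0)=q(0)$ and $p$ is analytic on $\mathbb{D}$, the classical boundary-point lemma for subordinations produces points $z_0\in\mathbb{D}$, $\zeta_0\in\partial\mathbb{D}$ (with $q(\zeta_0)$ interpreted as the appropriate radial boundary value), and a real number $m\geq 1$ such that
\[
p(z_0)=q(\zeta_0), \qquad z_0p'(z_0)=m\,\zeta_0 q'(\zeta_0).
\]
Substituting yields
\[
\psi\bigl(p(z_0),z_0p'(z_0)\bigr) \;=\; q(\zeta_0)+m\,\zeta_0 q'(\zeta_0)\phi(q(\zeta_0)) \;=\; q(\zeta_0)+m\,Q(\zeta_0).
\]
The subordination hypothesis forces this value to lie in $q(\mathbb{D})$, so the contradiction will come from showing that it cannot, under either (ii) or (iii).

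In case (ii), convexity of $q$ settles it immediately. The convex region $q(\mathbb{D})$ lies in the closed half-plane whose outward normal at the boundary point $q(\zeta_0)$ is $\zeta_0 q'(\zeta_0)$. The component of $mQ(\zeta_0)=m\zeta_0 q'(\zeta_0)\phi(q(\zeta_0))$ along that outward normal is $m\,|\zeta_0 q'(\zeta_0)|\,\RE\phi(q(\zeta_0))$, which is strictly positive by hypothesis (i) and $m\geq 1$. Hence $q(\zeta_0)+mQ(\zeta_0)$ strictly escapes the supporting half-plane and is therefore not in $q(\mathbb{D})$.

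In case (iii), starlikeness of $Q$, I would reduce to Lemma~\ref{p6int2} via the auxiliary function $h(z):=q(z)+Q(z)$, which coincides with $\nu(q(z))+zq'(z)\varphi(q(z))$ upon taking $\nu(w)=w$ and $\varphi=\phi$. A direct calculation gives
\[
\frac{zh'(z)}{Q(z)} \;=\; \frac{1}{\phi(q(z))}+\frac{zQ'(z)}{Q(z)},
\]
and both summands have positive real part, the first by hypothesis (i) together with the elementary equivalence $\RE w>0\Longleftrightarrow\RE(1/w)>0$, the second by the assumed starlikeness of $Q$. Thus Lemma~\ref{p6int2} applies once the given subordination $p+zp'\phi(p)\prec q$ is upgraded to $p+zp'\phi(p)\prec h$. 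I would supply this upgrade by a short boundary-point verification of $q\prec h$; geometrically, the perturbation $h-q=Q$ pushes every boundary point of $q(\mathbb{D})$ outward (because $Q$ maps $\mathbb{D}$ onto a starlike domain), so $q(\mathbb{D})\subset h(\mathbb{D})$, and transitivity through the univalent dominant $h$ completes the reduction.

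The main obstacle is the intermediate subordination $q\prec h$ needed in case (iii). Where case (ii) boils down to a single supporting-hyperplane inequality, case (iii) asks one to parlay a starlikeness hypothesis on the auxiliary function $Q$ into a containment statement about $q(\mathbb{D})$; the delicate technical point is showing rigorously that ``$Q$ starlike plus $\RE\phi(q)>0$'' forces $h(\mathbb{D})$ to engulf $q(\mathbb{D})$, essentially by checking that $h$ is close-to-convex and hence univalent, and that its boundary lies outside the supporting-line configuration used in (ii).
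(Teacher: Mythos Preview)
The paper does not supply a proof of this lemma at all; it is quoted from Miller--Mocanu's monograph \cite[Theorem~3.4a]{ural} and used as a black box in Section~\ref{p6sec3}. So there is no ``paper's own proof'' to compare against, and your attempt must be judged on its own.

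Your treatment of case~(ii) is the standard supporting-half-plane argument for convex $q$ and is correct. Case~(iii), however, has a genuine gap that you yourself identify. The reduction to Lemma~\ref{p6int2} hinges on the intermediate subordination $q\prec h$, and your justification---that ``$Q$ starlike pushes every boundary point of $q(\mathbb{D})$ outward''---is a heuristic, not a proof. Starlikeness of $Q$ means $Q(\mathbb{D})$ is star-shaped about the origin; it says nothing directly about the containment $q(\mathbb{D})\subset h(\mathbb{D})$, since $h(z)=q(z)+Q(z)$ is a pointwise, not a global, translation. Worse, in the hypothesis of the lemma $q$ is merely analytic in case~(iii) (univalence is asserted only of $Q$), so even the transitivity step ``$f\prec q$ and $q\prec h$ imply $f\prec h$'' is not automatic. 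Finally, note that in Miller--Mocanu's numbering Theorem~3.4a precedes Theorem~3.4i, so the intended proof of the former certainly does not invoke the latter; the original argument handles case~(iii) directly through the admissibility condition $q(\zeta)+mQ(\zeta)\notin q(\mathbb{D})$ for $|\zeta|=1$, $m\geq 1$, using starlikeness of $Q$ in a more intrinsic way than your outline suggests. As written, your case~(iii) is incomplete.
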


\section{Results associated with strong starlikeness}\label{p6sec2}

In the first result, we derived the conditions on the complex valued functions $C, D$ and $E$ so that $C(z) z^{2}p''(z)+D(z)zp'(z) + E(z)p(z)=0$ implies that $p(z)\prec ((1+z)/(1-z))^{\alpha}$, $(0< \alpha\leq 1)$.

\begin{theorem}\label{p6thm2.1}
Let $n$ be a positive integer, $0 < \alpha \leq 1, C(z)=C \geq 0$. Suppose that the functions $D, E:\mathbb{D}\to\mathbb{C}$ satisfy
\begin{equation}\label{p6eqn2.8}
|\IM E(z)|< n \alpha(\RE D(z)-C).
\end{equation}
If $p \in \mathcal{H}[1,n]$ satisfies the equation
\begin{equation}\label{p6eqn2.1}
C z^{2}p''(z)+D(z)zp'(z) + E(z)p(z) = 0
\end{equation}
and $p(z)\neq 0$, then $p(z)\prec ((1+z)/(1-z))^{\alpha}$.
\end{theorem}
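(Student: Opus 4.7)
The plan is to reduce the claim to showing that $P(z) := p(z)^{1/\alpha}$ has positive real part in $\mathbb{D}$, and then invoke Lemma~\ref{p6lem1}. Since $p(z)\neq 0$ on the simply connected disc $\mathbb{D}$ with $p(0)=1$, the principal branch makes $P$ a well-defined, nonvanishing element of $\mathcal{H}[1,n]$. Because $q(z):=((1+z)/(1-z))^{\alpha}$ is univalent with $q(\mathbb{D})=\{w:|\arg w|<\alpha\pi/2\}$, the subordination $p\prec q$ is equivalent to $\RE P(z)>0$ in $\mathbb{D}$.

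Next I would rewrite \eqref{p6eqn2.1} as an ODE in $P$. Using $p=P^{\alpha}$, $p'=\alpha P^{\alpha-1}P'$ and $p''=\alpha(\alpha-1)P^{\alpha-2}(P')^{2}+\alpha P^{\alpha-1}P''$, and cancelling the nonvanishing factor $P^{\alpha-2}$, the equation becomes
\[
\psi(P(z),zP'(z),z^{2}P''(z);z):=C\alpha(\alpha-1)(zP')^{2}+C\alpha\,P\,z^{2}P''+\alpha D(z)\,P\,zP'+E(z)P^{2}=0.
\]
Taking $\Omega:=\{0\}$ in Lemma~\ref{p6lem1}, it suffices to verify $\psi(i\rho,\sigma,\mu+i\nu;z)\neq 0$ whenever $\rho,\sigma,\mu,\nu\in\mathbb{R}$, $\sigma\leq -n(1+\rho^{2})/2$ and $\mu+\sigma\leq 0$. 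Observe that $\rho\neq 0$ since $P$ does not vanish, and that \eqref{p6eqn2.8} implies $\RE D(z)>C$.

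The admissibility check would focus on the imaginary part of $\psi$, which equals $\alpha\rho[C\mu+\sigma\RE D(z)]-\rho^{2}\IM E(z)$. Suppose for contradiction that $\psi=0$; solving for $\rho\IM E(z)$ and using $\mu\leq -\sigma$, $C\geq 0$, and $\sigma\leq -n(1+\rho^{2})/2$ yields
\[
\rho\IM E(z)=\alpha\bigl[C\mu+\sigma\RE D(z)\bigr]\leq \alpha\sigma\bigl(\RE D(z)-C\bigr)\leq -\frac{n\alpha(1+\rho^{2})}{2}\bigl(\RE D(z)-C\bigr),
\]
so that $|\rho||\IM E(z)|\geq \tfrac{n\alpha(1+\rho^{2})}{2}(\RE D(z)-C)$. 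Combined with the hypothesis $|\IM E(z)|<n\alpha(\RE D(z)-C)$, this forces $(1+\rho^{2})/2<|\rho|$, i.e.\ $(1-|\rho|)^{2}<0$, a contradiction. The main obstacle is spotting the correct substitution $P=p^{1/\alpha}$ and recognizing that, of the two admissibility inequalities, only the vanishing of $\IM\psi$ is needed; the real part of $\psi$ never enters, which is what lets the argument go through uniformly, including the degenerate case $C=0$ where \eqref{p6eqn2.1} reduces to a first-order ODE.
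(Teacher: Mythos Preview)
Your proof is correct and follows essentially the same approach as the paper: substitute $P=p^{1/\alpha}$, rewrite \eqref{p6eqn2.1} as a second-order admissibility condition, and apply Lemma~\ref{p6lem1}. The only cosmetic difference is that you clear the denominator, so your $\psi$ is $r$ times the paper's; at $r=i\rho$ this makes your $\IM\psi$ equal to $\rho\cdot\RE\psi_{\text{paper}}$, and where the paper bounds that same quadratic $-\tfrac{n\alpha}{2}(\RE D-C)(1+\rho^{2})-\rho\,\IM E$ via its discriminant, you reach the identical conclusion via $(1-|\rho|)^{2}\geq 0$, with the restriction $\rho\neq 0$ (from $P\neq 0$) playing the same role as the paper's implicit avoidance of the $s^{2}/r$ singularity.
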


\begin{proof}
For $p \in \mathcal{H}[1,n]$ with $p(z)\neq 0$, define the function $q:\mathbb{D}\to\mathbb{C}$ by
\begin{equation}\label{p6eqn2.50}
q(z)=p^{\frac{1}{\alpha}}(z).
\end{equation}
Then $q$ is analytic in $\mathbb{D}$ and $q(0)=1$. Note that
\begin{equation}\label{p6eqn2.2}
p(z)=q^{\alpha}(z),
\end{equation}
\begin{equation}\label{p6eqn2.3}
p'(z)=\alpha q^{\alpha -1}(z) q'(z)
\end{equation}
and
\begin{equation}\label{p6eqn2.4}
p''(z)=\alpha((\alpha-1)q^{\alpha-2}(z)(q'(z))^{2}+q^{\alpha-1}(z)q''(z)).
\end{equation}
Using ~\eqref{p6eqn2.2}, ~\eqref{p6eqn2.3} and ~\eqref{p6eqn2.4} in ~\eqref{p6eqn2.1}, a simple computation shows that $q$ satisfies the following equation
\begin{equation}\label{p6eqn2.5}
\begin{split}
C\alpha ((\alpha-1)\frac{(z q'(z))^{2}}{q(z)}+ z^{2}q''(z))+ D(z)\alpha zq'(z)+E(z)q(z)=0.
\end{split}
\end{equation}
Let $\psi: {\mathbb{C}}^{3} \times \mathbb{D} \to \mathbb{C}$ be defined by
\begin{equation}\label{p6eqn2.6}
\psi(r,s,t;z)= C\alpha ((\alpha-1)\frac{s^{2}}{r}+ t)+ D(z)\alpha s+E(z)r.
\end{equation}
Then the condition ~\eqref{p6eqn2.5} is same as $\psi(q(z),zq'(z),z^{2}q''(z);z)\in \Omega=\{0\}$. To show that $\RE q(z)>0$ for $z \in \mathbb{D}$, by Lemma ~\ref{p6lem1}, it is sufficient to prove that $\RE \psi(i\rho, \sigma, \mu + i \nu; z)<0$ for $z \in \mathbb{D}$, and for all real $\rho,$ $\sigma$, $\mu$ and $\nu$ satisfying $\sigma \leq -n(1+\rho^2)/2$, $\mu+\sigma \leq 0$. For $z\in \mathbb{D}$, it follows from ~\eqref{p6eqn2.6} that
\begin{equation}\label{p6eqn2.7}
\RE \psi(i\rho, \sigma, \mu + i \nu; z)=C \alpha\mu + \RE D(z) \alpha \sigma- \IM E(z)\rho.\\
\end{equation}
Using conditions $\RE D(z) > C \geq 0$, $\mu+\sigma \leq 0$ and $\sigma \leq -n(1+\rho^2)/2$, we get
\[ C \alpha\mu + \RE D(z)\sigma \alpha \leq -C\alpha\sigma+\RE D(z)\sigma\alpha  \leq -n(1+\rho^2)\alpha(\RE D(z)- C)/2 .\]
Thus from ~\eqref{p6eqn2.7}, we have
\begin{align*}
\RE \psi(i\rho, \sigma, \mu + i \nu; z)& \leq -\frac{n}{2}(1+\rho^2)\alpha(\RE D(z)- C)-\IM E(z) \rho\\
&= -\frac{n}{2}\alpha(\RE D(z)- C)\rho^2-\IM E(z) \rho -\frac{n}{2}\alpha(\RE D(z)- C) \\
&=:a\rho^{2}+b\rho+c,
\end{align*}
where $a=c=-n\alpha(\RE D(z)- C)/2$ and $b=-\IM E(z)$. In view of ~\eqref{p6eqn2.8}, we see that $a<0$ and $b^{2}-4ac < 0$ and hence $a\rho^{2}+b\rho+c <0$ which proves that $\RE \psi(i\rho, \sigma, \mu + i \nu; z)<0$ for all $z \in \mathbb{D}$. Hence by Lemma ~\ref{p6lem1}, we deduce that $\RE q(z)>0$ and by using ~\eqref {p6eqn2.50}, it reduces to
\[ p^{\frac{1}{\alpha}}(z) \prec \frac{1+z}{1-z} \]
which implies that $p(z) \prec ((1+z)/(1-z))^{\alpha}$. \end{proof}

By taking $\alpha=1$ in Theorem ~\ref{p6thm2.1}, we get the following result.

\begin{corollary}\label{p6cor1}
Let $n$ be a positive integer, $C(z)=C \geq 0$. Suppose that the functions $D, E :\mathbb{D}\to\mathbb{C}$ satisfy
\[|\IM E(z)|< n (\RE D(z)-C).\]
If $p \in \mathcal{H}[1,n]$ satisfies $C z^{2}p''(z)+D(z)zp'(z) + E(z)p(z)=0$ and $p(z)\neq 0,$
then $\RE p(z)>0$.
\end{corollary}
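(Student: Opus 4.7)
The plan is immediate: this corollary is precisely the case $\alpha=1$ of Theorem~\ref{p6thm2.1}. Setting $\alpha=1$, the hypothesis $|\IM E(z)|<n\alpha(\RE D(z)-C)$ of the theorem collapses to $|\IM E(z)|<n(\RE D(z)-C)$, matching the corollary's hypothesis exactly, and every other assumption coincides. So the first step of my proof is simply to invoke Theorem~\ref{p6thm2.1} with $\alpha=1$.

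The conclusion of Theorem~\ref{p6thm2.1} with $\alpha=1$ is $p(z)\prec\bigl((1+z)/(1-z)\bigr)^{1}=(1+z)/(1-z)$. Since the Möbius transformation $(1+z)/(1-z)$ maps $\mathbb{D}$ conformally onto the right half-plane $\{w:\RE w>0\}$, the subordination $p\prec(1+z)/(1-z)$ is equivalent to $\RE p(z)>0$ on $\mathbb{D}$. Quoting this equivalence closes the proof.

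As an aside, one could bypass Theorem~\ref{p6thm2.1} entirely and prove the corollary from scratch by applying Lemma~\ref{p6lem1} directly to $p$, with admissible function $\psi(r,s,t;z)=Ct+D(z)s+E(z)r$. The auxiliary substitution $q=p^{1/\alpha}$ and the resulting nonlinear term $(\alpha-1)(zq'(z))^{2}/q(z)$ vanish when $\alpha=1$, so the admissibility check reduces to verifying that $\RE\psi(i\rho,\sigma,\mu+i\nu;z)=C\mu+\RE D(z)\,\sigma-\IM E(z)\,\rho<0$ under the constraints $\sigma\le -n(1+\rho^{2})/2$ and $\mu+\sigma\le 0$. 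The same quadratic-in-$\rho$ estimate used in the proof of Theorem~\ref{p6thm2.1}, with $a=c=-n(\RE D(z)-C)/2$ and $b=-\IM E(z)$, again yields negative discriminant from the hypothesis. Either route works; there is no real obstacle, since the corollary is a direct specialization.
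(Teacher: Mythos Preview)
Your proof is correct and matches the paper's approach exactly: the paper simply states that the corollary follows by taking $\alpha=1$ in Theorem~\ref{p6thm2.1}, and you have filled in the one-line verification that $p\prec(1+z)/(1-z)$ means $\RE p(z)>0$. Your aside about the direct application of Lemma~\ref{p6lem1} is also accurate but unnecessary for the purposes of the corollary.
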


By taking $C(z)=0$ in Theorem ~\ref{p6thm2.1}, we get the following result for first order differential subordination.

\begin{corollary}\label{p6cor2}
Let $n$ be a positive integer, $0 < \alpha \leq 1$. Suppose that the functions $D, E:\mathbb{D}\to\mathbb{C}$ satisfy
\[|\IM E(z)|< n \alpha \RE D(z).\]
If $p \in \mathcal{H}[1,n]$ satisfies
$D(z)zp'(z) + E(z)p(z)=0$ and $p(z)\neq 0,$
then $p(z)\prec ((1+z)/(1-z))^{\alpha}$.
\end{corollary}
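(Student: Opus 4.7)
The plan is to obtain Corollary~\ref{p6cor2} as a direct instantiation of Theorem~\ref{p6thm2.1} with the constant function $C(z) \equiv 0$. The hypothesis $C(z)=C \geq 0$ of the theorem is satisfied at the boundary value $C=0$, so the theorem is applicable. Under this choice the admissibility condition $|\IM E(z)| < n\alpha(\RE D(z)-C)$ reduces exactly to $|\IM E(z)| < n\alpha \RE D(z)$, matching the hypothesis of the corollary, and the differential equation $Cz^2 p''(z)+D(z) zp'(z)+E(z)p(z)=0$ reduces to the first-order equation $D(z) zp'(z)+E(z)p(z)=0$. Thus every assumption of Theorem~\ref{p6thm2.1} is in force, and its conclusion yields $p(z)\prec ((1+z)/(1-z))^\alpha$ directly.

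If one prefers a standalone argument that does not appeal to Theorem~\ref{p6thm2.1}, the same scheme works with a simplified admissibility function. Set $q(z)=p^{1/\alpha}(z)$, so that $q\in\mathcal{H}[1,n]$, $q(0)=1$, and a direct substitution shows $q$ satisfies $D(z)\alpha z q'(z) + E(z) q(z) = 0$. Define $\psi(r,s;z) := D(z)\alpha s + E(z) r$, so that the equation becomes $\psi(q(z),zq'(z);z)\in\Omega=\{0\}$. Applying the first-order version of Lemma~\ref{p6lem1}, it suffices to verify $\RE \psi(i\rho,\sigma;z) < 0$ for real $\rho,\sigma$ with $\sigma \leq -n(1+\rho^2)/2$. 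A direct computation gives $\RE\psi(i\rho,\sigma;z) = \alpha\RE D(z)\,\sigma - \IM E(z)\,\rho$, and using $\sigma \leq -n(1+\rho^2)/2$ together with $\RE D(z)>0$ (which is forced by the strict inequality $|\IM E(z)|<n\alpha\RE D(z)$) yields the upper bound $a\rho^2+b\rho+c$ with $a=c=-n\alpha\RE D(z)/2$ and $b=-\IM E(z)$. The hypothesis $|\IM E(z)|<n\alpha\RE D(z)$ translates into $b^2<4ac$ and $a<0$, so this quadratic is strictly negative for all real $\rho$.

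I do not anticipate any real obstacle, since the result is explicitly the $C=0$ specialization of Theorem~\ref{p6thm2.1}; the only point worth checking is that the strict inequality $|\IM E(z)|<n\alpha(\RE D(z)-C)$ survives the limit $C\to 0$, which it does without any loss. Accordingly the proof should be essentially one line citing the theorem.
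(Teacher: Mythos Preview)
Your proposal is correct and matches the paper's approach exactly: the paper obtains this corollary by taking $C(z)=0$ in Theorem~\ref{p6thm2.1}, just as you do. Your additional standalone argument simply replays the proof of Theorem~\ref{p6thm2.1} in the degenerate case $C=0$, so it adds nothing new but is also entirely valid.
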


\begin{remark}\label{p6rmk2}
The Corollary ~\ref{p6cor2} for $\alpha =1$ should be compared with \cite[Corollary 4.1a.1, p.\ 189]{ural}
\end{remark}

The confluent (Kummer) hypergeometric function $\Phi(a,c;z)$ is given by
\begin{align}
\Phi(a,c;z) = \frac{\Gamma{(c)}}{\Gamma{(a)}} \sum_{n=0}^\infty \frac{\Gamma{(a+n)}}{\Gamma{(c+n)}} \frac{z^n}{n!} = \sum_{n=0}^\infty
\frac{(a)_{n}}{(c)_{n}} \frac{z^n}{n!},
\end{align}
where $ a, c \in \mathbb{C}$, $c \neq 0, -1, -2, \cdots$, and $(\lambda)_n$ denotes the Pochhammer symbol defined by $(\lambda)_0=1$, $(\lambda)_n= \lambda(\lambda+1)_{n-1}$.   The function $\Phi \in \mathcal{H}[1,1]$ is a solution of the differential equation
\begin{align}\label{p6eqn2.16}
z\Phi''(a, c; z) + (c-z)\Phi'(a, c; z) - a \Phi(a, c; z) = 0
\end{align}
introduced by Kummer in 1837 \cite{TM}. The function $\Phi(a, c; z)$ satisfies the following recursive relation
\[c\Phi'(a; c; z) = a \Phi(a+1; c+1; z).\]
When $ \RE c > \RE a >0$, the function $\Phi$ can be expressed in the integral form
\begin{align*}
\Phi(a; c; z) =  \frac{ \Gamma{(c)}} { \Gamma{(a)}\Gamma{(c-a)}}  \int_{0}^1 t^{a-1} (1-t)^{c-a-1} e^{ t z} dt.
\end{align*}
There has been several works \cite{AE, miller-dif-sub,Rus-Vsing,saiful} studying geometric properties of the function $\Phi(a; c; z)$, such as on its close-to-convexity, starlikeness and convexity. By the use of Theorem ~\ref{p6thm2.1}, we obtain the following sufficient conditions for $\Phi(a,c;z) \prec ((1+z)/(1-z))^{\alpha}$, ($0 < \alpha \leq 1$).

\begin{corollary}\label{p6cor6}
Let $0 < \alpha \leq 1$. If $a,c \in \mathbb{R}$ satisfy
\begin{equation}\label{p6eqn2.40}
|c-1|>\sqrt{1+a^{2}/\alpha^{2}}
\end{equation}
and $\Phi(a;c;z) \neq 0$, then $\Phi(a;c;z) \prec ((1+z)/(1-z))^{\alpha}$.
\end{corollary}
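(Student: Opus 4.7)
The plan is to invoke Theorem \ref{p6thm2.1} with $p(z) = \Phi(a;c;z)$. Since $\Phi \in \mathcal{H}[1,1]$ we take $n=1$; multiplying the Kummer equation \eqref{p6eqn2.16} through by $z$ recasts it as
$$z^{2}\Phi''(a,c;z) + (c-z)\,z\Phi'(a,c;z) + (-az)\Phi(a,c;z) = 0,$$
which matches \eqref{p6eqn2.1} with the constant $C = 1$, $D(z) = c-z$, and $E(z) = -az$. The nonvanishing hypothesis $\Phi(a;c;z)\neq 0$ is already in hand, so the proof reduces entirely to verifying the coefficient condition \eqref{p6eqn2.8}, namely
$$|\IM(-az)| < \alpha\bigl(\RE(c-z) - 1\bigr) \quad \text{for all } z \in \mathbb{D}.$$

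Writing $z = x+iy$ with $x^{2}+y^{2}<1$ and using that $a,c$ are real, this reads $|a|\,|y| < \alpha(c-1-x)$. Since $|y| \le \sqrt{1-x^{2}}$ on $\overline{\mathbb{D}}$, it suffices to prove the stronger one-variable inequality
$$h(x) := \alpha(c-1-x) - |a|\sqrt{1-x^{2}} > 0 \quad \text{on } [-1,1].$$
For the right-hand side of \eqref{p6eqn2.8} to be positive one plainly needs $c > 1$, so the hypothesis $|c-1| > \sqrt{1+a^{2}/\alpha^{2}}$ will be used in the form $c - 1 > \sqrt{1+a^{2}/\alpha^{2}}$, which automatically forces $c > 2$.

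A routine calculus argument locates the unique interior critical point $x^{*} = \alpha/\sqrt{\alpha^{2}+a^{2}} \in (0,1)$ of $h$ and confirms (from $h''>0$) that it is a minimum, with value
$$h(x^{*}) = \alpha(c-1) - \sqrt{\alpha^{2}+a^{2}}.$$
The endpoint values $h(1) = \alpha(c-2)$ and $h(-1) = \alpha c$ are both positive in view of $c > 2$, so the minimum of $h$ on $[-1,1]$ is attained at $x^{*}$; and the hypothesis is precisely $\alpha(c-1) > \sqrt{\alpha^{2}+a^{2}}$, whence $h(x^{*}) > 0$. This confirms \eqref{p6eqn2.8}, and Theorem \ref{p6thm2.1} then delivers $\Phi(a;c;z) \prec ((1+z)/(1-z))^{\alpha}$. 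The only substantive step is the optimization of $h$; everything else is a formal identification of coefficients.
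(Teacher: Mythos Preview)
Your proof is correct and follows the same route as the paper: identify $C=1$, $D(z)=c-z$, $E(z)=-az$ from the Kummer equation and reduce \eqref{p6eqn2.8} to a one-variable estimate via $y^{2}<1-x^{2}$. The only difference is cosmetic---the paper squares the inequality and checks that the quadratic $-(a^{2}+\alpha^{2})x^{2}+2\alpha^{2}(c-1)x+(a^{2}-(c-1)^{2}\alpha^{2})$ has negative discriminant, whereas you minimize $h$ directly by calculus; your explicit remark that one must have $c>2$ (so that $\RE D(z)-C>0$) is a point the paper's squared argument passes over silently.
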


\begin{proof}
In view of ~\eqref{p6eqn2.16}, the function $\Phi(a,c;z)$ satisfies ~\eqref{p6eqn2.1} with $C(z)=1$, $D(z)=c-z$ and $E(z)=-az$. For $z=x+iy \in \mathbb{D}$, we have
\begin{equation*}
\begin{split}
(\IM E(z))^{2}&-(\RE D(z)-C)^{2}\alpha^{2}\\
&\quad{}=a^{2}y^{2}-(c-x-1)^{2}\alpha^{2}\\
&\quad{}<a^{2}(1-x^{2})-(c-x-1)^{2}\alpha^{2}\\
&\quad{}=-(a^{2}+\alpha^{2})x^{2}+2\alpha^{2}(c-1)x+(a^{2}-(c-1)^{2}\alpha^{2}) =:px^{2}+qx+r,
\end{split}
\end{equation*}
where $p=-(a^{2}+\alpha^{2})$, $q=2\alpha^{2}(c-1)$ and $r=a^{2}-(c-1)^{2}\alpha^{2}$. Clearly, $p<0$ and using ~\eqref{p6eqn2.40}, we get $q^{2}-4pr<0$. So, $(\IM E(z))^{2}< (\RE D(z)-C)^{2}\alpha^{2}$ and thus, all the conditions of the Theorem ~\ref{p6thm2.1} are satisfied. Hence, $\Phi(a;c;z) \prec ((1+z)/(1-z))^{\alpha}$.
\end{proof}

The following example illustrates the Corollary ~\ref{p6cor6}.

\begin{example}
	Clearly, the following functions
	\[\Phi_{1}(z)=\Phi(2;6;z)=\frac{20 \left(z^3+6 z^2+18 z+6 e^z (z-4)+24\right)}{z^5}\]
\begin{equation*}
\begin{split}
	\Phi_{2}(z)=\Phi(5;10;z)=&\frac{15120}{z^9}\Big(-z^4-20 z^3-180 z^2+e^z (z^4-20 z^3+180 z^2-840 z+1680)\\
&-840 z-1680\Big)
	\end{split}
\end{equation*}
satisfy the hypothesis of Corollary ~\ref{p6cor6}. Therefore $\Phi_i(z)\prec ((1+z)/(1-z))^{\alpha_{i}}$ (for $i=1,2)$ with $\alpha_{1}>1/\sqrt{6}$ and $\alpha_{2}>\sqrt{5}/4$. These subordinations are illustrated graphically in Figure \ref{p6fig1}.

\begin{figure}[htb!]
 \centering
 {\includegraphics[width=.3\linewidth]{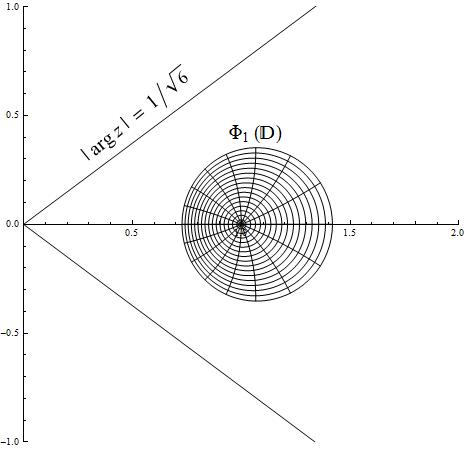}}
 \hspace{2cm}
 {\includegraphics[width=.3\linewidth]{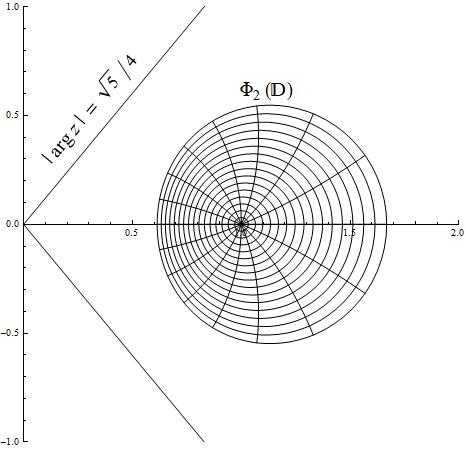}}
 \caption{Graph showing $\Phi_i(z)\prec ((1+z)/(1-z))^{\alpha_{i}}$ ($i=1,2)$ with $\alpha_{1}>1/\sqrt{6}$ and $\alpha_{2}>\sqrt{5}/4$.}\label{p6fig1}
 \end{figure}

 \end{example}

\begin{remark}\label{p6rmk4}
Taking $\alpha=1$ in Corollary ~\ref{p6cor6}, we get the following well known result \cite[Lemma 4.5a, p.\ 232]{ural}:

If $a, c \in \mathbb{R}$ such that $c>1+\sqrt{1+a^{2}}$, then $\RE \Phi(a;c;z)>0$.
\end{remark}

Next, we study the generalized and normalized Bessel function of the first kind of order $p$, $u_{p}(z)=u_{p,b,c}(z)$ given by the power series
 \[ u_{p}(z)= \sum_{n=0}^\infty\frac{(-c/4)^{n}}{(k)_{n}} \frac{z^n}{n!}, \]
where $b,p,c \in \mathbb{C}$ such that $k=p+(b+1)/2$ and $k \neq 0, -1, -2, \cdots$. The function $u_{p} \in \mathcal{H}[1,1]$ is a solution of the differential equation
\begin{equation}\label{p6eqn2.23}
4z^{2}u_{p}''(z) + 4kzu_{p}'(z)+czu_{p}(z)=0.
\end{equation}
The function $u_{p}(z)$ also satisfy the following recursive relation
\[ 4ku_{p}(z) = -cu_{p+1} (z),  \]
which is useful for studying its various geometric properties. More results regarding this function can be found in \cite{baricz2,baricz,baricz1}. By the use of Theorem ~\ref{p6thm2.1}, we obtain the following sufficient conditions for $u_{p}(z) \prec ((1+z)/(1-z))^{\alpha}$  $(0 < \alpha \leq 1)$.

\begin{corollary}\label{p6cor9}
Suppose that $0 < \alpha \leq 1$ and if $b,p,c \in \mathbb{R}$ satisfy the following condition
\begin{equation}\label{p6eqn2.41}
|c|<4\alpha(k-1)
\end{equation}
and $u_{p}(z)\neq 0$, then $u_{p}(z) \prec ((1+z)/(1-z))^{\alpha}$.
\end{corollary}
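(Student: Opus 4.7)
The strategy is to recognize the Bessel differential equation \eqref{p6eqn2.23} as an instance of the master equation \eqref{p6eqn2.1} in Theorem~\ref{p6thm2.1} and then verify the single hypothesis \eqref{p6eqn2.8}. Concretely, I would read off from $4z^{2}u_{p}''(z)+4kzu_{p}'(z)+czu_{p}(z)=0$ the identifications $C(z)=C=4$, $D(z)=4k$, $E(z)=cz$, with $n=1$ since $u_{p}\in\mathcal{H}[1,1]$. Note that $k=p+(b+1)/2\in\mathbb{R}$ by hypothesis, so $D$ is real-valued.

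The main and essentially only step is to check \eqref{p6eqn2.8}, namely $|\IM E(z)|<\alpha(\RE D(z)-C)$ on $\mathbb{D}$. With the identifications above, $\RE D(z)-C=4k-4=4(k-1)$, which is positive precisely when $k>1$. Since $|c|\ge 0$, the hypothesis $|c|<4\alpha(k-1)$ already forces $k>1$. For $z=x+iy\in\mathbb{D}$ one has $|y|<1$, hence
\[ |\IM E(z)|=|c\,\IM z|=|c|\,|y|<|c|<4\alpha(k-1)=\alpha(\RE D(z)-C), \]
which is exactly \eqref{p6eqn2.8} with $n=1$.

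Having verified the hypotheses of Theorem~\ref{p6thm2.1}, and using the assumption $u_{p}(z)\ne 0$ on $\mathbb{D}$, the conclusion $u_{p}(z)\prec ((1+z)/(1-z))^{\alpha}$ follows immediately. There is no genuine obstacle here; the proof is a direct application of Theorem~\ref{p6thm2.1}, parallel in spirit to how Corollary~\ref{p6cor6} was deduced for the Kummer function, but simpler because $D$ and $C$ are already real constants and $E$ is linear in $z$, so the bound on $|\IM E|$ reduces to the crude estimate $|\IM z|<1$ rather than requiring an auxiliary quadratic discriminant analysis.
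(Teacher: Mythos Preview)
Your proof is correct and follows the same overall approach as the paper: identify $C=4$, $D(z)=4k$, $E(z)=cz$, $n=1$ from \eqref{p6eqn2.23} and invoke Theorem~\ref{p6thm2.1}. The only difference is in verifying \eqref{p6eqn2.8}: the paper squares both sides, uses $y^{2}<1-x^{2}$, and runs a quadratic discriminant argument in $x$ (parallel to Corollary~\ref{p6cor6}), whereas you simply use $|y|<1$ to get $|\IM E(z)|=|c|\,|y|<|c|<4\alpha(k-1)$ directly. Your verification is cleaner here precisely because $\RE D(z)-C=4(k-1)$ is constant in $z$, so the extra machinery is unnecessary; the paper's detour also forces it to remark that $c\neq 0$ (to ensure the leading coefficient $P=-c^{2}$ is strictly negative), a side condition your argument does not need.
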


\begin{proof}
In view of ~\eqref{p6eqn2.23}, the function $u_{p}(z)$ satisfies ~\eqref{p6eqn2.1} with $C(z)=4$, $D(z)=4k$ and $E(z)=cz$. Also, note that $c \neq 0$ and $k>1$. For $z=x+iy \in \mathbb{D}$, we have
\begin{equation*}
\begin{split}
(\IM E(z))^{2}&- (\RE D(z)-C)^{2}\alpha^{2}\\
&\quad{}=c^{2}y^{2}-(4k-4)^{2}\alpha^{2}\\
&\quad{}<c^{2}(1-x^{2})-16(k-1)^{2}\alpha^{2}\\
&\quad{}=-c^{2}x^{2}-16(k-1)^{2}\alpha^{2}+c^{2}=:Px^{2}+Qx+R,
\end{split}
\end{equation*}
where $P=-c^{2}$, $Q=0$ and $R=-16(k-1)^{2}\alpha^{2}+c^{2}$. Clearly, $P<0$ and from ~\eqref{p6eqn2.41}, we get $Q^{2}-4PR<0$. So, $(\IM E(z))^{2}- (\RE D(z)-C)^{2}\alpha^{2}<0$ . Therefore, by applying Theorem ~\ref{p6thm2.1}, we conclude that $u_{p}(z) \prec ((1+z)/(1-z))^{\alpha}$.
\end{proof}

It is appealing to note that the above corollary is significant and leads to various  interesting relations involving the generalized and normalized Bessel function and  trigonometric functions by selecting the suitable choices of the parameters involved. The following example illustrates the Corollary ~\ref{p6cor9}.

\begin{example}
	Clearly, the following functions
	\[u_{2}(z)=u_{2,2,6}(z)=-\frac{5}{4 \sqrt{6}}\left(\frac{ (2 z-1) \sin (\sqrt{6 z})}{z^{5/2}}+\frac{\sqrt{6} \cos (\sqrt{6 z})}{ z^2}\right)\]
\begin{equation*}
\begin{split}
	u_{7}(z)=u_{7,6,10}(z)=&\frac{26.189163}{16}\Big(\frac{9\sqrt{10}\sin(\sqrt{10z})}{z^{19/2}}\Big(2000 z^4-61600 z^3+420420 z^2-720720 z\\
&+153153\Big)-\frac{10\cos(\sqrt{10z})}{z^{9}}  \Big(400 z^4-39600 z^3+540540 z^2-1891890 z\\
&+1378377\Big)\Big)
	\end{split}
\end{equation*}
	satisfy the hypothesis of Corollary ~\ref{p6cor9}. Therefore $u_i(z)\prec ((1+z)/(1-z))^{\alpha_{i}}$ (for $i=2,7)$ with $\alpha_{2}>3/5$ and $\alpha_{7}>5/19$. These subordinations are illustrated graphically in Figure \ref{p7fig1}.

\begin{figure}[htb!]
 \centering
 {\includegraphics[width=.3\linewidth]{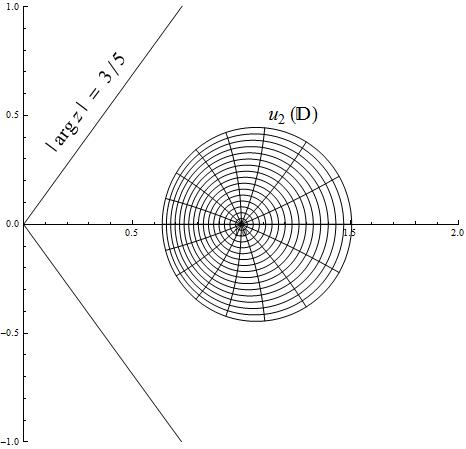}}
 \hspace{2cm}
 {\includegraphics[width=.3\linewidth]{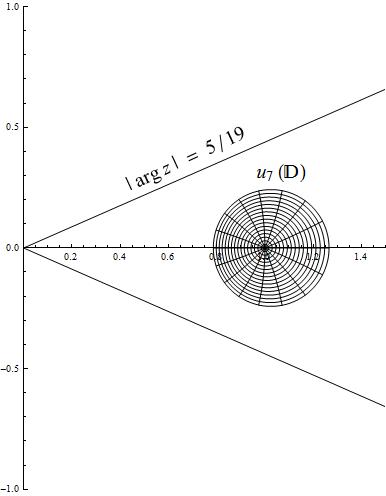}}
 \caption{Graph showing $u_i(z)\prec ((1+z)/(1-z))^{\alpha_{i}}$ ($i=2,7)$ with $\alpha_{2}>3/5$ and $\alpha_{7}>5/19$.}
 \label{p7fig1}
 \end{figure}

 \end{example}

\begin{remark}\label{p6rmk6}
For $\alpha=1$, Corollary ~\ref{p6cor9} reduces to the following result \cite[Theorem 2.2, p.\ 29]{baricz1}:

If $b,p,c \in \mathbb{R}$ such that $k>(|c|/4)+1$, then $\RE u_{p}(z)>0$ for all $z \in \mathbb{D}$.
\end{remark}

The following result gives the sufficient conditions for functions $h\in\mathcal{A}_{n}$ to belong to the class of strongly starlike functions of order $\alpha$.

\begin{corollary}\label{p6cor3}
Let $n$ be a positive integer, $0< \alpha \leq 1$ and $C(z)=C \geq 0$. Suppose the functions $D, E:\mathbb{D}\to\mathbb{C}$ satisfy
\[|\IM E(z)|< n \alpha(\RE D(z)-C).\]
If $h \in \mathcal{A}_{n}$ satisfy
\begin{equation*}
\begin{split}
C \Big(2\Big(\frac{zh'(z)}{h(z)}\Big)^{3}&-\frac{3z^{3}h'(z)h''(z)}{h^{2}(z)}+ \frac{z^{3} h'''(z)}{h(z)}\Big)\\
&+(2C+D(z)) \Big(\frac{z^{2}h''(z)}{h(z)}-\Big(\frac{zh'(z)}{h(z)}\Big)^{2}\Big)+(D(z)+ E(z))\frac{zh'(z)}{h(z)}=0,
\end{split}
\end{equation*}
then $h \in \mathcal{S}^*[\alpha]$.
\end{corollary}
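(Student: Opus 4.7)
The plan is to reduce the corollary directly to Theorem~\ref{p6thm2.1} via the substitution $p(z):=zh'(z)/h(z)$. The idea is that, under this substitution, the third-order equation for $h$ displayed in the statement becomes term-by-term identical with the second-order equation $Cz^{2}p''(z)+D(z)zp'(z)+E(z)p(z)=0$ of Theorem~\ref{p6thm2.1}. Once this identity is verified, the hypothesis on $D$ and $E$ is literally the one required by the theorem, so Theorem~\ref{p6thm2.1} will deliver $p(z)\prec\bigl((1+z)/(1-z)\bigr)^{\alpha}$, which by definition is the assertion $h\in\mathcal{S}^{*}[\alpha]$.

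First I would check the initial data. Writing $h(z)=z+a_{n+1}z^{n+1}+\cdots$ gives $p(z)=1+na_{n+1}z^{n}+\cdots$, so $p\in\mathcal{H}[1,n]$ with $p(0)=1\neq 0$. Analyticity and non-vanishing of $p$ on $\mathbb{D}$ are implicit in the statement, since the displayed equation for $h$ requires $h(z)\neq 0$ on $\mathbb{D}\setminus\{0\}$.

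Next I would establish the two key identities. From $\log p=\log z+\log h'-\log h$ one obtains, by a direct computation,
\[
zp'(z)=p(z)+\frac{z^{2}h''(z)}{h(z)}-p^{2}(z),
\]
and differentiating once more and multiplying through by $z^{2}$ gives
\[
z^{2}p''(z)=2\,\frac{z^{2}h''(z)}{h(z)}-2p^{2}(z)+\frac{z^{3}h'''(z)}{h(z)}-3\,\frac{z^{3}h'(z)h''(z)}{h^{2}(z)}+2p^{3}(z).
\]
Substituting these into $Cz^{2}p''(z)+D(z)zp'(z)+E(z)p(z)$ and regrouping reproduces verbatim the left-hand side of the equation for $h$ in the corollary; in particular, the combinations $(2C+D(z))\bigl(z^{2}h''/h-(zh'/h)^{2}\bigr)$ and $(D(z)+E(z))\,zh'/h$ emerge naturally once the $p^{2}$-contributions coming from $Cz^{2}p''$ and $D\cdot zp'$ are merged. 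The main obstacle here is purely algebraic bookkeeping of the $p^{2}$ and $p^{3}$ terms generated by the chain rule; no genuine analytic difficulty is hidden in this step.

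With the two differential equations shown to be equivalent, Theorem~\ref{p6thm2.1} applies to $p\in\mathcal{H}[1,n]$ and yields $p(z)\prec\bigl((1+z)/(1-z)\bigr)^{\alpha}$, which is exactly the statement $h\in\mathcal{S}^{*}[\alpha]$.
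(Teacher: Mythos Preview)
Your proposal is correct and follows essentially the same approach as the paper: set $p(z)=zh'(z)/h(z)$, verify that the displayed third-order equation for $h$ is precisely $Cz^{2}p''+D\,zp'+E\,p=0$, and invoke Theorem~\ref{p6thm2.1}. Your write-up is in fact more detailed than the paper's (which only records $zp'/p=1+zh''/h'-zh'/h$ and then appeals to the theorem), and your explicit identities for $zp'$ and $z^{2}p''$ check out; the one small slip is that $h(z)\neq 0$ alone does not force $p(z)\neq 0$---one also needs $h'(z)\neq 0$---but the paper does not address this either.
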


\begin{proof}
Let the function $p:\mathbb{D}\to\mathbb{C}$ be defined by $p(z)=zh'(z)/h(z)$. Then $p$ is analytic in $\mathbb{D}$ with $p(0)=1$. A calculation shows that
\[\frac{zp'(z)}{p(z)}=1+\frac{zh''(z)}{h'(z)}-\frac{zh'(z)}{h(z)}. \]
The result now follows from Theorem ~\ref{p6thm2.1}.
\end{proof}

We obtain our next result by taking $n=1$, $C(z)=0,$ $D(z)=1$, $h(z)=f(z)g(z)$ with $f \in \mathcal{A}$, $g \in \mathcal{H}[1,1]$ and $E(z)=-1-zh''(z)/h'(z)+zh'(z)/h(z)$ in Corollary ~\ref{p6cor3}.

\begin{corollary}\label{p6cor4}
Let $0 < \alpha \leq 1$. Suppose that the functions $f \in \mathcal{A}$, $g \in \mathcal{H}[1,1]$ satisfy
 \[\Big|\IM\Big(1+\frac{zf''(z)g(z)+2zf'(z)g'(z)
 +zg''(z)f(z)}{f'(z)g(z)+g'(z)f(z)}\Big)
 -\left(\frac{zf'(z)}{f(z)} +\frac{zg'(z)}{g(z)}\right)\Big|<\alpha,\]
then $fg \in \mathcal{S}^*[\alpha]$.
\end{corollary}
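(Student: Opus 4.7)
My plan is to apply Corollary~\ref{p6cor3} directly with the specialization $n=1$, $C(z)=0$, $D(z)=1$, $h(z)=f(z)g(z)$, and
\[ E(z) = -1 - \frac{zh''(z)}{h'(z)} + \frac{zh'(z)}{h(z)}, \]
as flagged in the paragraph preceding the statement. Only two things need to be checked: that the differential equation hypothesis of Corollary~\ref{p6cor3} is automatically fulfilled under these choices, and that the imaginary-part bound there reduces to the stated inequality.

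For the first check, set $p(z) = zh'(z)/h(z)$, which is well-defined because $h=fg\in\mathcal{A}$ (so $h'(0)\neq 0$ and $h/z$ is nonvanishing near the origin). Logarithmic differentiation gives
\[ \frac{zp'(z)}{p(z)} = 1 + \frac{zh''(z)}{h'(z)} - \frac{zh'(z)}{h(z)} = -E(z), \]
so $zp'(z)+E(z)p(z)=0$ holds identically. With $C=0$ and $D=1$ this is exactly the differential equation of Corollary~\ref{p6cor3} rephrased via $h$ (it is the very identity used in the proof of that corollary to pass from the third-order equation in $h$ to a first-order equation in $p$), and is therefore automatic.

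For the second check, I would compute the derivatives of the product $h=fg$, namely
\[ h'(z) = f'(z)g(z)+f(z)g'(z), \qquad h''(z) = f''(z)g(z)+2f'(z)g'(z)+f(z)g''(z), \]
together with $zh'(z)/h(z) = zf'(z)/f(z)+zg'(z)/g(z)$. Substituting these into the formula $-E(z) = 1 + zh''(z)/h'(z) - zh'(z)/h(z)$ reproduces exactly the expression appearing inside the modulus in the statement, and since $|\IM(-E)| = |\IM E|$, the requirement $|\IM E(z)| < n\alpha(\RE D(z) - C) = \alpha$ coincides verbatim with the stated hypothesis. Corollary~\ref{p6cor3} then delivers $fg\in\mathcal{S}^*[\alpha]$.

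I do not anticipate any genuine obstacle; the argument is essentially a specialization followed by a product-rule calculation. The only item requiring care is bookkeeping of signs when interchanging $E$ with $-E$ inside the imaginary part, and verifying that $p(z) = zh'(z)/h(z)$ lies in $\mathcal{H}[1,1]$ so that Corollary~\ref{p6cor3} genuinely applies with $n=1$.
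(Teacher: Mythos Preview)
Your proposal is correct and follows exactly the paper's approach: the paper derives this corollary simply by specializing Corollary~\ref{p6cor3} with $n=1$, $C=0$, $D(z)=1$, $h=fg$, and $E(z)=-1-zh''(z)/h'(z)+zh'(z)/h(z)$, which is precisely what you do, and your verification that the differential identity is automatic and that the product-rule expansion recovers the stated inequality fills in the routine details the paper omits.
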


\begin{remark}\label{p6rmk8}
For $g(z)\equiv1$, Corollary ~\ref{p6cor4} reduces to the following result:

Let $0 < \alpha \leq 1$. Suppose that the function $f \in \mathcal{A}$ satisfy
 \[\Big|\IM\Big(1+\frac{zf''(z)}{f'(z)}-\frac{zf'(z)}{f(z)} \Big)\Big|<\alpha,\]
then $f \in \mathcal{S}^*[\alpha]$.
\end{remark}

\section{Differential Subordination for  strong  starlikeness   of order $\alpha$}\label{p6sec3}

Let $p$ be an analytic function in $\mathbb{D}$ with $p(0)=1$. In the first two results, condition on $\beta$ is obtained so that the subordination
\[ 1+\beta \frac{zp'(z)}{p^{k}(z)} \prec  \varphi_{CAR}(z)\quad(k=1,2) \]
implies $p(z)\prec ((1+z)/(1-z))^{\alpha}$  $(0<\alpha\leq 1)$.

\begin{theorem}\label{p6thm3.1}Let $p$ be an analytic function defined on $\mathbb{D}$ with $p(0)=1$ satisfying
\[ 1+\beta \frac{zp'(z)}{p(z)} \prec \varphi_{CAR}(z). \]
If $|\beta|\geq (\sqrt{(4\sqrt{3}+8)/(3\sqrt{3})})/\alpha\simeq 1.6947/\alpha \ (0<\alpha\leq 1)$, then $p(z)\prec ((1+z)/(1-z))^{\alpha}$.
\end{theorem}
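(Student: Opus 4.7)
The plan is to apply Lemma~\ref{p6int2} with the dominant $q(z) = ((1+z)/(1-z))^{\alpha}$ and auxiliaries $\nu(w) \equiv 1$, $\varphi(w) = \beta/w$, so that the expression $\nu(p(z)) + zp'(z)\varphi(p(z))$ in \eqref{p2eq1.1} coincides with $1 + \beta zp'(z)/p(z)$. A direct computation gives $zq'(z)/q(z) = 2\alpha z/(1-z^{2})$, hence
\[ Q(z) := zq'(z)\varphi(q(z)) = \frac{2\alpha\beta z}{1-z^{2}}, \qquad h(z) := \nu(q(z)) + Q(z) = 1 + \frac{2\alpha\beta z}{1-z^{2}}. \]
The proof then reduces to verifying (a) the admissibility hypotheses of Lemma~\ref{p6int2} on $Q$ and $h$, and (b) the containment $\varphi_{CAR}(\mathbb{D}) \subseteq h(\mathbb{D})$, which promotes the given hypothesis to $1 + \beta zp'(z)/p(z) \prec h(z)$, the exact input the lemma needs to conclude $p \prec q$.

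Part (a) is routine: one computes $zQ'(z)/Q(z) = (1+z^{2})/(1-z^{2})$ and notes that $\RE((1+z^{2})/(1-z^{2})) = (1-|z|^{4})/|1-z^{2}|^{2} > 0$ on $\mathbb{D}$, so $Q$ is starlike; since $h' = Q'$, the same identity gives $\RE(zh'(z)/Q(z)) > 0$.

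Part (b) is the main obstacle and requires identifying $h(\mathbb{D})$. I would study the auxiliary map $\phi(z) = z/(1-z^{2})$: it is univalent on $\mathbb{D}$ (any two distinct preimages $z_{1}, z_{2}$ of a common value satisfy $z_{1}z_{2} = -1$, impossible in $\mathbb{D}$), and on the unit circle it takes the purely imaginary values $\phi(e^{i\theta}) = i/(2\sin\theta)$, sweeping the rays $[i/2, i\infty)$ and $(-i\infty, -i/2]$ each twice as $\theta$ varies. Hence its image is $\mathbb{C} \setminus ((-i\infty,-i/2]\cup[i/2,i\infty))$. Writing $\beta = |\beta|e^{i\psi}$, the image of $h$ is therefore $\mathbb{C}$ minus two opposite rays from $1 \pm i\alpha\beta$ in the directions $\pm ie^{i\psi}$; for real positive $\beta$ (the case in which the stated bound is sharp) these reduce to the two vertical rays $\{w : \RE w = 1,\ \pm\IM w \geq \alpha|\beta|\}$. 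Consequently, $\varphi_{CAR}(\mathbb{D}) \subseteq h(\mathbb{D})$ is equivalent to the cardioid $\Omega_{C}$ avoiding these rays, i.e., to bounding the vertical extent of $\Omega_{C}$ on the line $\RE w = 1$.

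Finally, substituting $u = 1$ into the defining equation $(9u^{2}+9v^{2}-18u+5)^{2} = 16(9u^{2}+9v^{2}-6u+1)$ reduces it to $(9v^{2}-4)^{2} = 16(9v^{2}+4)$, i.e., $27v^{4}-72v^{2}-16 = 0$. The positive root in $v^{2}$ is $(12+8\sqrt{3})/9$, which upon rationalization equals $(4\sqrt{3}+8)/(3\sqrt{3})$. Thus the maximum of $|\IM w|$ on $\Omega_{C}\cap\{\RE w = 1\}$ is exactly $\sqrt{(4\sqrt{3}+8)/(3\sqrt{3})}$, so the hypothesis $\alpha|\beta| \geq \sqrt{(4\sqrt{3}+8)/(3\sqrt{3})}$ is precisely the sharp condition for $\Omega_{C}\subseteq h(\mathbb{D})$, and Lemma~\ref{p6int2} then delivers $p(z)\prec ((1+z)/(1-z))^{\alpha}$.
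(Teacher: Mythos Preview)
Your proof is correct and follows the same overall strategy as the paper: apply Lemma~\ref{p6int2} with $q(z)=((1+z)/(1-z))^{\alpha}$, $\nu\equiv1$, $\varphi(w)=\beta/w$, verify starlikeness of $Q$, and reduce everything to showing $\varphi_{CAR}\prec h$ where $h(z)=1+2\alpha\beta z/(1-z^{2})$. The only difference lies in how that last subordination is checked. The paper parametrizes $\partial h(\mathbb{D})$ by $z=e^{it}$, rewrites the condition ``outside the cardioid'' as $|\sqrt{6h(e^{it})-2}-2|\ge 2$, sets $w=6h(e^{it})-2=4+6i\alpha\beta/\sin t$, and arrives (after substituting $x=\sin t$) at the polynomial inequality $27\alpha^{4}\beta^{4}-72\alpha^{2}\beta^{2}x^{2}-16x^{4}\ge 0$, whose worst case $x=1$ yields the stated bound. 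You instead identify $h(\mathbb{D})$ explicitly as the plane minus the two vertical slits $\{1\pm iy:y\ge\alpha|\beta|\}$ and then intersect the cardioid boundary with the line $\RE w=1$, obtaining the \emph{same} quartic $27v^{4}-72v^{2}-16=0$ and hence the same threshold $(12+8\sqrt{3})/9=(4\sqrt{3}+8)/(3\sqrt{3})$. Your geometric description makes the sharpness of the bound transparent, while the paper's parametric computation avoids having to justify univalence of $h$ and the precise shape of $h(\mathbb{D})$; but the two routes are equivalent and meet at the identical algebraic condition.
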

\begin{proof}
Define the function $q:\mathbb{D}\to\mathbb{C}$ by $q(z)=((1+z)/(1-z))^{\alpha} \ (0<\alpha\leq 1)$ with $q(0)=1$. Let us define $\varphi(w)=\beta/w$, $\nu(w)=1$ and
\[Q(z):=zq'(z)\varphi(q(z))=\frac{\beta zq'(z)}{q(z)}=\frac{2\alpha \beta z}{1-z^{2}}. \]
Since the mapping $z/(1-z^{2})$ maps $\mathbb{D}$ onto the entire plane minus the two half lines $1/2\leq y < \infty$ and $-\infty< y \leq -1/2$, $Q(z)$ is starlike univalent in $\mathbb{D}$. It follows from Lemma~\ref{p6int2}, that the subordination
\[ 1+\beta \frac{zp'(z)}{p(z)}\prec 1+\beta \frac{zq'(z)}{q(z)}  \]
implies  $p(z)\prec q(z)$.  The theorem is proved by computing $\beta$ so that
\begin{equation}\label{p6eqn3.1}
 \varphi_{CAR}(z):=1+\frac{4z}{3}+\frac{2z^{2}}{3}\prec 1+\beta \frac{zq'(z)}{q(z)}= 1+\frac{2\alpha \beta z}{1-z^{2}}:=h(z).
\end{equation}
Clearly, $\varphi_{CAR}(\mathbb{D})=\left\{w\in\mathbb{C}:|-2+\sqrt{6w-2}|<2 \right\}$. The subordination $\varphi_{CAR}(z)\prec h(z)$ holds if $\partial h(\mathbb{D})\subset \mathbb{C}\setminus \overline{\varphi_{CAR}(\mathbb{D})}$. Thus, by using the definition of $h$ as given in ~\eqref{p6eqn3.1}, the subordination $\varphi_{CAR}(z)\prec h(z)$ holds if for $t\in[-\pi,\pi]$, we have
\begin{equation}\label{p6eqn3.36}
\left|\sqrt{4+\frac{12\alpha\beta e^{i t}}{1-e^{2i t}}}-2\right|\geq 2.
\end{equation}
Set
\begin{equation}\label{p6eqn3.2}
w=u+iv=4+(12\alpha\beta e^{i t})/(1-e^{2i t}).
\end{equation}
  Then, condition ~\eqref{p6eqn3.36} holds if $|\sqrt{w}-2|\geq 2$ which is same as $|w|\geq 4\RE(\sqrt{w})$ or equivalently if
\begin{equation}\label{p6eqn3.3}
 (u^{2}+v^{2}-8u)^{2}-64(u^{2}+v^{2})\geq 0.
\end{equation}
From ~\eqref{p6eqn3.2}, we get $u=4$ and $v=(6\alpha\beta)/\sin t$. Using these, we see that ~\eqref{p6eqn3.3} reduces to
\[g(t):=\frac{48}{\sin^{4} t}  (-16 \sin^{4} t-72\alpha^{2}\beta^{2}\sin^{2} t+27 \alpha^{4}\beta^{4})\geq 0 \quad (0\leq t \leq 2\pi).\]
Since $g(t)=g(-t)$, we restrict to $0\leq t \leq \pi$. Writing $x= \sin t$, the problem reduces to finding the values of $\beta$ for which
\[ f(x):=-16x^{4}-72\alpha^{2}\beta^{2}x^{2} +27 \alpha^{4}\beta^{4}\geq 0 \quad (0\leq x\leq 1).\]
The function $f$ is decreasing as $f'(x)=-144\alpha^{2}\beta^{2}x-64x^{3} <0$ and therefore, $f(x)\geq f(1)=-16-72\alpha^{2}\beta^{2} +27 \alpha^{4}\beta^{4} \geq 0$ using the hypothesis that \[|\beta|\geq \big(\sqrt{(4\sqrt{3}+8)/(3\sqrt{3})}\big)/\alpha\simeq1.6947/\alpha.\qedhere\]
\end{proof}

\begin{theorem}\label{p6thm3.2}
Let $p$ be an analytic function defined on $\mathbb{D}$ with $p(0)=1$ satisfying
\[ 1+\beta \frac{zp'(z)}{p^{2}(z)} \prec \varphi_{CAR}(z), \]
then the following results hold:
\begin{enumerate}[(a)]
  \item If $\beta\geq 4$ or $\beta\leq -4/3$, then $p(z)\prec (1+z)/(1-z)$.
  \item If $\beta>0$ and $0<\alpha<1$ satisfy
  \begin{equation}\label{p6eqn3.8}
\begin{split}
  9\alpha^{2}\beta^{2}\Big(\frac{1-\alpha}{1+\alpha} \Big)^{\alpha}(1-\alpha^{2})^{-2}&\Big(-8+\alpha^{2}\Big(8+3\beta^{2}\Big(\frac{1-\alpha}{1+\alpha} \Big)^{\alpha} \Big) \Big)\\
  &-64\alpha\beta(1-\alpha^{2})^{-\frac{1}{2}}\Big(\sqrt{\frac{1-\alpha}{1+\alpha}} \Big)^{\alpha}\sin \big(\frac{\alpha\pi}{2} \big)\geq 16,
 \end{split}
   \end{equation}
   then $p(z)\prec ((1+z)/(1-z))^{\alpha}$.
\end{enumerate}
\end{theorem}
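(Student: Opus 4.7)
The plan is to apply Lemma~\ref{p6int2} with $q(z)=((1+z)/(1-z))^{\alpha}$ (taking $\alpha=1$ in part (a)), $\nu(w)=1$, and $\varphi(w)=\beta/w^{2}$, so that
\[ Q(z)=zq'(z)\varphi(q(z))=\frac{2\alpha\beta z}{(1+z)^{1+\alpha}(1-z)^{1-\alpha}},\qquad h(z):=\nu(q(z))+Q(z)=1+Q(z). \]
Logarithmic differentiation yields $zQ'(z)/Q(z)=(1-2\alpha z+z^{2})/(1-z^{2})$; writing $z=x+iy$ one finds its real part equals $(1-|z|^{2})((x-\alpha)^{2}+y^{2}+1-\alpha^{2})/|1-z^{2}|^{2}>0$, which makes $Q$ starlike univalent and simultaneously verifies $\RE(zh'(z)/Q(z))=\RE(zQ'(z)/Q(z))>0$. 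Hence by Lemma~\ref{p6int2} it suffices to establish the subordination $\varphi_{CAR}(z)\prec h(z)$, for then the hypothesis $1+\beta zp'(z)/p^{2}(z)\prec\varphi_{CAR}(z)$ chains through $h$ to give $p(z)\prec q(z)$.

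For part (a) one has $h(z)=1+2\beta z/(1+z)^{2}$; since $z/(1+z)^{2}$ is a rotation of the Koebe function, $h(\mathbb{D})$ is $\mathbb{C}$ with a closed real half-line $L$ removed whose endpoint lies at $1+\beta/2$. Since $\varphi_{CAR}(\pm1)=3,1/3$ while $\varphi_{CAR}(0)=1$, the cardioid region meets the real axis precisely in the segment $[1/3,3]$, so $\varphi_{CAR}(\mathbb{D})\cap L=\emptyset$ forces either $1+\beta/2\geq 3$ or $1+\beta/2\leq 1/3$, i.e., $\beta\geq 4$ or $\beta\leq -4/3$.

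For part (b) I would first describe $\partial h(\mathbb{D})$. Using $1+e^{it}=2\cos(t/2)e^{it/2}$ and $1-e^{it}=-2i\sin(t/2)e^{it/2}$, a direct computation gives, for $t\in(0,\pi)$,
\[ h(e^{it})=1+\frac{\alpha\beta\,e^{i(1-\alpha)\pi/2}}{2\cos^{1+\alpha}(t/2)\sin^{1-\alpha}(t/2)}, \]
with the complex conjugate expression for $t\in(-\pi,0)$; so $\partial h(\mathbb{D})$ is a pair of half-lines through $1$ in the directions $e^{\pm i(1-\alpha)\pi/2}$, both starting at the minimum distance
\[ X_{\min}=\frac{\alpha\beta}{\sqrt{1-\alpha^{2}}}\left(\frac{1-\alpha}{1+\alpha}\right)^{\alpha/2}, \]
attained when $\cos^{2}(t/2)=(1+\alpha)/2$. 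Next, using the cardioid description $\varphi_{CAR}(\mathbb{D})=\{w:|\sqrt{6w-2}-2|<2\}$ recalled in the proof of Theorem~\ref{p6thm3.1}, the substitution $w=1+Xe^{\pm i(1-\alpha)\pi/2}$ converts the non-intersection condition $|\sqrt{6w-2}-2|\geq 2$ (together with the identity $(u^{2}+v^{2}-8u)^{2}-64(u^{2}+v^{2})\geq 0$ from that proof and $\cos((1-\alpha)\pi/2)=\sin(\alpha\pi/2)$) into the quartic inequality
\[ g(X):=27X^{4}-72X^{2}-64X\sin(\alpha\pi/2)-16\geq 0. \]
The main obstacle is then a shape analysis of $g$: a Descartes-rule count shows that $g$ has a single positive root and is negative below it, positive above, so the requirement $g(X)\geq 0$ for all $X\geq X_{\min}$ collapses to the single test $g(X_{\min})\geq 0$. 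Substituting $X_{\min}$ into $g$ and collecting the powers of $(1\pm\alpha)$ recovers exactly the stated inequality~\eqref{p6eqn3.8}, completing the argument.
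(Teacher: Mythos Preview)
Your proof is correct and follows the same overall strategy as the paper: apply Lemma~\ref{p6int2} with $q(z)=((1+z)/(1-z))^{\alpha}$, $\nu(w)=1$, $\varphi(w)=\beta/w^{2}$, verify that $Q$ is starlike via $zQ'(z)/Q(z)=(1+z^{2}-2\alpha z)/(1-z^{2})$, and then establish $\varphi_{CAR}\prec h$ by checking that $\partial h(\mathbb{D})$ lies outside the cardioid, with the critical point occurring at $\cos^{2}(t/2)=(1+\alpha)/2$.

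The differences are only in execution. For part~(a) the paper substitutes $x=\cos(t/2)$, obtains $f(x)=(4x^{2}+3\beta)^{3}(\beta-4x^{2})/(16x^{8})$, and reads off $\beta\geq 4$ or $\beta\leq -4/3$; your Koebe--slit argument is more geometric and reaches the same thresholds (note that the \emph{open} cardioid meets $\mathbb{R}$ in $(1/3,3)$, not $[1/3,3]$, which is what yields the non-strict inequalities). For part~(b) the paper works directly with a function $f(x)$ of $x=\cos(t/2)$, asserts that $f'(x)=0$ at $x=\sqrt{(1+\alpha)/2}$ with $f''>0$ there, and evaluates $f$ at that point to recover~\eqref{p6eqn3.8}; your reparametrisation by the ray coordinate $X$ turns the condition into the quartic $g(X)=27X^{4}-72X^{2}-64X\sin(\alpha\pi/2)-16\geq 0$, and Descartes' rule gives a cleaner justification that it suffices to test $g(X_{\min})$. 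Both routes arrive at exactly the same minimiser and the same final inequality.
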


\begin{proof}
Define the function $q:\mathbb{D}\to\mathbb{C}$ by $q(z)=((1+z)/(1-z))^{\alpha} \ (0<\alpha\leq1)$. Let us define $\varphi(w)=\beta/w^{2}$, $\nu(w)=1$ and $Q(z)=zq'(z)\varphi(q(z))=(2\alpha\beta z(1-z)^{\alpha})/((1-z^{2})(1+z)^{\alpha})$. A computation shows that
\[\frac{zQ'(z)}{Q(z)}=\frac{1+z^{2}-2\alpha z}{1-z^{2}}. \]
Let $z=r e^{i t}, -\pi\leq t \leq \pi, 0\leq r<1$. Then
\[ \RE\left(\frac{1+z^{2}-2\alpha z}{1-z^{2}}\right)=\frac{(1-r^{2})(1+r^{2}-2\alpha r\cos t)}{(1-r^{2}\cos 2t)^{2}+(r^{2}\sin 2t)^{2}}. \]
Since $0<\alpha \leq 1,$ $0\leq r<1$, we have $1+r^{2}-2\alpha r\cos t \geq 1+r^{2}-2\alpha r\geq (1-\alpha r)^{2}> 0$ and so, it follows that $Q$ is starlike in $\mathbb{D}$. As a consequence, the subordination
\[ 1+\beta \frac{zp'(z)}{p^{2}(z)}\prec 1+\beta \frac{zq'(z)}{q^{2}(z)}  \]
implies $p(z)\prec q(z)$ by Lemma~\ref{p6int2}. We complete the proof by showing
\begin{equation}\label{p6eqn3.5}
\varphi_{CAR}(z):=1+\frac{4z}{3}+\frac{2z^{2}}{3}\prec 1+\beta \frac{zq'(z)}{q^{2}(z)}= 1+\frac{2\alpha\beta z(1-z)^{\alpha}}{(1-z^{2})(1+z)^{\alpha}}:=h(z).
\end{equation}
 Then $\varphi_{CAR}(\mathbb{D})=\left\{w\in\mathbb{C}:|-2+\sqrt{6w-2}|<2 \right\}$. The subordination $\varphi_{CAR}(z)\prec h(z)$ holds if $\partial h(\mathbb{D})\subset \mathbb{C}\setminus \overline{\varphi_{CAR}(\mathbb{D})}$. Thus, by using the definition of $h$ as given in ~\eqref{p6eqn3.5}, the subordination $\varphi_{CAR}(z)\prec h(z)$ holds if for $t\in[-\pi,\pi]$, we have
\[ \left|\sqrt{4+\frac{12\alpha \beta e^{i t}(1-e^{i t})^{\alpha}}{(1-e^{2i t})(1+e^{i t})^{\alpha}}}-2\right|\geq 2. \]
Let
\begin{equation}\label{p6eqn3.6}
 w=u+iv=4+\frac{12\alpha \beta e^{i t}(1-e^{i t})^{\alpha}}{(1-e^{2i t})(1+e^{i t})^{\alpha}}.
 \end{equation}
 Then, proceeding as in Theorem~\ref{p6thm3.1}, we have to show ~\eqref{p6eqn3.3}. From \eqref{p6eqn3.6}, we get
 \[u=4+\frac{6 \alpha \beta \sin(\alpha \pi/2)(\sin t/2)^{\alpha}}{\sin t (\cos t/2)^{\alpha}},\quad v=\frac{6 \alpha \beta \cos(\alpha\pi/2)(\sin t/2)^{\alpha}}{\sin t (\cos t/2)^{\alpha}}\]
and using these, we see that the inequality ~\eqref{p6eqn3.3} reduces to
 \begin{equation*}
\begin{split}
  -16 + \alpha\beta \csc t (\tan^{\alpha} t/2)(-64\sin(\alpha \pi/2)&+9\alpha\beta\csc t(\tan^{\alpha} t/2)(-8 \\
  &+3  \alpha^{2}\beta^{2}\csc^{2}t \tan^{2 \alpha} t/2   ))\geq 0.
\end{split}
\end{equation*}
Under the given hypothesis, we show that $f(x)\geq 0$ for $0\leq x\leq 1$, where $x=\cos t/2$ and
\begin{equation}\label{p6eqn3.7}
\begin{split}
  f(x)=-16 +&\frac{\alpha\beta}{2x \sqrt{1-x^{2}}} \Big(\frac{\sqrt{1-x^{2}}}{x}\Big)^{\alpha}\Big(-64\sin(\alpha \pi/2)\\  &+\frac{9\alpha\beta}{2x\sqrt{1-x^{2}}}\Big(\frac{\sqrt{1-x^{2}}}{x}\Big)^{\alpha}
  \Big(-8+\frac{3  \alpha^{2}\beta^{2}}{4 x^{2}(1-x^{2})}\Big(\frac{1-x^{2}}{x^{2}}\Big)^{\alpha}  \Big)\Big).
\end{split}
\end{equation}

     (a) If $\alpha=1$, then ~\eqref{p6eqn3.7} reduces to $f(x)=(4x^{2}+3\beta)^{3}(\beta-4x^{2})/16x^{8}$. Clearly, $f(x)\geq 0$ if $\beta \leq -4/3$ or $\beta \geq 4$.

     (b) Assume that $0<\alpha<1$. A simple computation shows that $f'(x)=0$ if $x=\pm\sqrt{(1+\alpha)/2}$. Then $\sqrt{(1+\alpha)/2}\in[0,1]$  and $f''(\sqrt{(1+\alpha)/2})>0$ if $\beta>0$. Therefore, $f(x)\geq f(\sqrt{(1+\alpha)/2})$ for $\beta>0$. Observe that
     \begin{equation*}
\begin{split}
  f(\sqrt{(1+\alpha)/2}) =-16 &+ 9\alpha^{2}\beta^{2}\Big(\frac{1-\alpha}{1+\alpha} \Big)^{\alpha}(1-\alpha^{2})^{-2}\Big(-8+\alpha^{2}\Big(8+3\beta^{2}\Big(\frac{1-\alpha}{1+\alpha} \Big)^{\alpha} \Big) \Big)\\
&-64\alpha\beta(1-\alpha^{2})^{-1/2}\Big(\sqrt{\frac{1-\alpha}{1+\alpha}} \Big)^{\alpha}\sin \big(\frac{\alpha\pi}{2} \big)
\end{split}
\end{equation*}
and $f(\sqrt{(1+\alpha)/2})\geq 0$ by ~\eqref{p6eqn3.8}. This completes our proof.
\end{proof}

The next result depicts the condition on $\beta$ so that the subordination
\[ p(z)+\beta \frac{zp'(z)}{p^{k}(z)} \prec\Big(\frac{1+z}{1-z}\Big)^{\alpha}\quad (k=0,1,2)  \]
implies $p(z)\prec ((1+z)/(1-z))^{\alpha}$, where $p$ is an analytic function in $\mathbb{D}$ with $p(0)=1$ and $0<\alpha\leq 1$.

\begin{theorem}\label{p6thm3.4}
Let $p$ be an analytic function defined on $\mathbb{D}$ with $p(0)=1$. Then following are the sufficient conditions for $p(z)\prec ((1+z)/(1-z))^{\alpha} \ (0<\alpha\leq 1)$.
\begin{enumerate}[(a)]
  \item The function $p$ satisfies the subordination
  \[ p(z)+\beta zp'(z) \prec \Big(\frac{1+z}{1-z}\Big)^{\alpha} \quad (\beta>0). \]
  \item The function $p$ satisfies the subordination
  \[ p(z) +\beta \frac{zp'(z)}{p(z)} \prec \Big(\frac{1+z}{1-z}\Big)^{\alpha} \quad (\beta>0).  \]
  \item The function $p$ satisfies the subordination
  \[ p(z) +\beta \frac{zp'(z)}{p^{2}(z)} \prec \Big(\frac{1+z}{1-z}\Big)^{\alpha}\quad (\text{either}\quad 0<\alpha<\frac{1}{2}, \beta>0 \quad \text{or}\quad \frac{1}{2}<\alpha\leq 1, \beta<0).  \]
\end{enumerate}
\end{theorem}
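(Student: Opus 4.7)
My plan is to apply Lemma~\ref{p6int3} in each of the three parts, taking the dominant to be $q(z)=((1+z)/(1-z))^{\alpha}$ and choosing $\phi$ appropriately so that the given subordination has the form $p(z)+\phi(p(z))zp'(z)\prec q(z)$. The function $q$ maps $\mathbb{D}$ onto the sector $\{w:|\arg w|<\alpha\pi/2\}$, which is convex for every $0<\alpha\leq 1$. Hence condition~(ii) of Lemma~\ref{p6int3} (``$q$ is convex'') is automatic throughout, and the only work is to verify condition~(i), namely $\RE\phi(q(z))>0$ on $\mathbb{D}$.

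For part~(a) take $\phi(w)=\beta$; then $\RE\phi(q(z))=\beta>0$ is trivial and Lemma~\ref{p6int3} immediately gives $p\prec q$. For part~(b) take $\phi(w)=\beta/w$; since $q(\mathbb{D})\subset\{w:|\arg w|<\alpha\pi/2\}\subset\{\RE w>0\}$, the set $1/q(\mathbb{D})$ enjoys the same containment, so $\RE(\beta/q(z))>0$ follows from $\beta>0$, and Lemma~\ref{p6int3} applies.

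For part~(c) take $\phi(w)=\beta/w^{2}$, so that $\phi(q(z))=\beta/q(z)^{2}$ and $q^{2}(\mathbb{D})=\{w:|\arg w|<\alpha\pi\}$. In sub-case~(i), $0<\alpha<1/2$, the wedge $q^{2}(\mathbb{D})$ still lies in the right half-plane, hence $\RE(1/q(z)^{2})>0$, and combined with $\beta>0$ this yields condition~(i). Sub-case~(ii), $1/2<\alpha\leq 1$ with $\beta<0$, is the delicate one: the wedge $q^{2}(\mathbb{D})$ crosses the imaginary axis (at $z=0$ one already has $\phi(q(0))=\beta<0$), so the hypothesis $\RE\phi(q)>0$ of Lemma~\ref{p6int3} fails. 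Here I would instead invoke the more flexible Lemma~\ref{p6int2}, whose condition (ii) is $\RE(zh'(z)/Q(z))>0$ with $Q(z)=zq'(z)\phi(q(z))$ and $h=q+Q$. Starlikeness of $Q$ follows from the computation already carried out in the proof of Theorem~\ref{p6thm3.2}: one finds $zQ'(z)/Q(z)=(1+z^{2}-2\alpha z)/(1-z^{2})$, which has positive real part on $\mathbb{D}$ for $0<\alpha\leq 1$. Writing
\[
\frac{zh'(z)}{Q(z)}=\frac{q(z)^{2}}{\beta}+\frac{zQ'(z)}{Q(z)},
\]
what remains is to show that the sign hypothesis on $\beta$, together with the strict positivity of $\RE(zQ'/Q)$ and the constrained argument of $q(z)^{2}$, force $\RE(zh'/Q)>0$ on $\mathbb{D}$.

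The main obstacle, as indicated, is the verification in sub-case (c)(ii): parts (a), (b), and (c)(i) reduce to essentially a one-line check of $\RE\phi(q)>0$, but in (c)(ii) the geometry of $q^{2}(\mathbb{D})$ straddles the imaginary axis, and the conclusion must be coaxed out of the combined estimate $\RE(q^{2}/\beta)+\RE(zQ'/Q)>0$ using Lemma~\ref{p6int2} rather than Lemma~\ref{p6int3}.
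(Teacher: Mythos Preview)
Your treatment of parts (a), (b), and sub-case (c)(i) is exactly the paper's: apply Lemma~\ref{p6int3} with the convex dominant $q(z)=((1+z)/(1-z))^{\alpha}$ and verify $\RE\phi(q(z))>0$ directly from the sector geometry of $q(\mathbb{D})$.

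For sub-case (c)(ii) ($1/2<\alpha\leq 1$, $\beta<0$) the paper does \emph{not} switch lemmas. It sets $M(z)=\phi(q(z))=\beta((1-z)/(1+z))^{2\alpha}$, computes the boundary value
\[
\RE M(e^{it})=\beta\cos(\alpha\pi)\,(\tan(t/2))^{2\alpha}>0,
\]
and then invokes Lemma~\ref{p6int3}. You are right to balk at this: since $M(0)=\beta<0$, the hypothesis $\RE\phi(q(z))>0$ of Lemma~\ref{p6int3} is violated in the interior of $\mathbb{D}$, and no boundary-only check can rescue it. So your diagnosis that Lemma~\ref{p6int3} does not apply as stated is correct, and the paper's argument here is itself incomplete.

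Unfortunately your proposed repair via Lemma~\ref{p6int2} has the same defect. With $\nu(w)=w$, $\varphi(w)=\beta/w^{2}$, $Q=\beta zq'/q^{2}$ and $h=q+Q$, you correctly obtain
\[
\frac{zh'(z)}{Q(z)}=\frac{q(z)^{2}}{\beta}+\frac{zQ'(z)}{Q(z)},
\]
but at $z=0$ this equals $1/\beta+1$, which is negative for every $\beta\in(-1,0)$. Hence condition~(ii) of Lemma~\ref{p6int2} fails on that range, and the ``combined estimate'' you outline cannot hold for all $\beta<0$. In short, for sub-case (c)(ii) neither the paper's route nor yours covers the full stated range; at best your approach could work for $\beta\leq-1$, and a genuinely different argument (or a restriction on $\beta$) is needed for $-1<\beta<0$.
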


\begin{proof}
Define the function $q:\mathbb{D}\to\mathbb{C}$ by $q(z)=((1+z)/(1-z))^{\alpha} \ ( 0<\alpha\leq 1)$, with $q(0)=1$. Clearly, the function $q$ is convex in $\mathbb{D}$.

   (a) Let us define $\phi(w)=\beta$ $(\beta>0)$. Since $\RE\phi(q(z))=\beta>0$, the result now follows from Lemma~\ref{p6int3}.

   (b) Define the function $\phi$ as $\phi(w)=\beta/w$ $(\beta>0)$. Consider
      \[\RE\phi(q(z))=\beta \RE\left(\frac{1}{q(z)}\right)>0.\]
      The result now follows from Lemma~\ref{p6int3}.

     (c) We define the function $\phi$ as $\phi(w)=\beta/w^{2}$. A simple computation shows that
      \[\phi(q(z))=\beta \left(\frac{1}{q^{2}(z)}\right)=\beta\Big(\frac{1-z}{1+z}\Big)^{2 \alpha}=:M(z).\]
      and for $-\pi\leq t \leq \pi$,
      \[\RE(M(e^{i t}))=\beta \cos(\alpha \pi)\Big(\frac{\sin t/2}{\cos t/2}\Big)^{2 \alpha}.  \]
      It is easy to see that $\RE(M(e^{i t}))>0$ if either $0<\alpha<1/2, \beta>0$ or $1/2<\alpha\leq 1, \beta<0$. This completes the proof by the use of Lemma~\ref{p6int3}.
\end{proof}

We will illustrate the above three theorems by the following examples. By taking $p(z)=zf'(z)/f(z)$ in Theorems ~\ref{p6thm3.1}, ~\ref{p6thm3.2} and ~\ref{p6thm3.4} gives the following example:

\begin{example}\label{p6eg3.1}
Let $f\in \mathcal{A}$ and $0<\alpha\leq 1$. The following are some sufficient conditions for the function $f\in \mathcal{S}^*[\alpha]:$
\begin{enumerate}
  \item  The function $f$ satisfies the subordination
  \[1+\beta\left(1+\frac{zf''(z)}{f'(z)}-\frac{zf'(z)}{f(z)}\right)\prec \varphi_{CAR}(z)\quad \Big(|\beta|\geq \frac{1}{\alpha}\sqrt{\frac{4\sqrt{3}+8}{3\sqrt{3}}}\simeq \frac{1.6947}{\alpha} \Big). \]

  \item The function $f$ satisfies the subordination
  \[ 1-\beta+\beta\frac{1+\frac{zf''(z)}{f'(z)}}{\frac{zf'(z)}{f(z)}}\prec \varphi_{CAR}(z).\]
   where $0<\alpha<1$ and $\beta>0$ satisfy
   \begin{equation*}
  \begin{split}
  9\alpha^{2}\beta^{2}\Big(\frac{1-\alpha}{1+\alpha} \Big)^{\alpha}(1-\alpha^{2})^{-2}&\Big(-8+\alpha^{2}\Big(8+3\beta^{2}\Big(\frac{1-\alpha}{1+\alpha} \Big)^{\alpha} \Big) \Big)\\
  &-64\alpha\beta(1-\alpha^{2})^{-\frac{1}{2}}\Big(\sqrt{\frac{1-\alpha}{1+\alpha}} \Big)^{\alpha}\sin \big(\frac{\alpha\pi}{2} \big)\geq 16.
 \end{split}
 \end{equation*}

\item  The function $f$ satisfies the subordination
  \[\frac{zf'(z)}{f(z)}+\beta\frac{zf'(z)}{f(z)}\left(1+\frac{zf''(z)}{f'(z)}-\frac{zf'(z)}{f(z)}\right)\prec \Big(\frac{1+z}{1-z}\Big)^{\alpha}\quad (\beta>0). \]

\item  The function $f$ satisfies the subordination
  \[(1-\beta)\frac{zf'(z)}{f(z)}+\beta\left(1+\frac{zf''(z)}{f'(z)}\right)\prec \Big(\frac{1+z}{1-z}\Big)^{\alpha}\quad (\beta>0). \]

  \item The function $f$ satisfies the subordination
  \[ \frac{zf'(z)}{f(z)}-\beta+\beta\frac{1+\frac{zf''(z)}{f'(z)}}{\frac{zf'(z)}{f(z)}}\prec \Big(\frac{1+z}{1-z}\Big)^{\alpha}\] whenever either $ 0<\alpha<\frac{1}{2}, \beta>0$ or $ \frac{1}{2}<\alpha\leq 1, \beta<0$.

\end{enumerate}
\end{example}

By taking $p(z)=f'(z)$ in Theorems ~\ref{p6thm3.1}, ~\ref{p6thm3.2} and ~\ref{p6thm3.4} gives the following example:

\begin{example}\label{p6eg3.2}
Let $f\in \mathcal{A}$ and $0<\alpha\leq 1$. The following are the sufficient conditions for the function $f'(z)\prec ((1+z)/(1-z))^{\alpha}:$
\begin{enumerate}
  \item  The function $f$ satisfies the subordination
  \[1+\beta\left(\frac{zf''(z)}{f'(z)}\right)\prec \varphi_{CAR}(z)\quad \Big(|\beta|\geq \frac{1}{\alpha}\sqrt{\frac{4\sqrt{3}+8}{3\sqrt{3}}}\simeq \frac{1.6947}{\alpha} \Big). \]

  \item The function $f$ satisfies the subordination
  \[ 1+\beta\left(\frac{zf''(z)}{(f'(z))^{2}}\right)\prec \varphi_{CAR}(z),\]
   where $0<\alpha<1$ and $\beta>0$ satisfy
   \begin{equation*}
\begin{split}
  9\alpha^{2}\beta^{2}\Big(\frac{1-\alpha}{1+\alpha} \Big)^{\alpha}(1-\alpha^{2})^{-2}&\Big(-8+\alpha^{2}\Big(8+3\beta^{2}\Big(\frac{1-\alpha}{1+\alpha} \Big)^{\alpha} \Big) \Big)\\
  &-64\alpha\beta(1-\alpha^{2})^{-\frac{1}{2}}\Big(\sqrt{\frac{1-\alpha}{1+\alpha}} \Big)^{\alpha}\sin \big(\frac{\alpha\pi}{2} \big)\geq 16.
 \end{split}
   \end{equation*}

   \item  The function $f$ satisfies the subordination
  \[f'(z)+\beta zf''(z)\prec \Big(\frac{1+z}{1-z}\Big)^{\alpha}\quad (\beta>0). \]

   \item  The function $f$ satisfies the subordination
  \[f'(z)+\beta\left(\frac{zf''(z)}{f'(z)}\right)\prec \Big(\frac{1+z}{1-z}\Big)^{\alpha}\quad (\beta>0). \]

  \item The function $f$ satisfies the subordination
  \[ f'(z)+\beta\left(\frac{zf''(z)}{(f'(z))^{2}}\right)\prec \Big(\frac{1+z}{1-z}\Big)^{\alpha}\quad (\text{either}\quad 0<\alpha<\frac{1}{2}, \beta>0 \quad \text{or}\quad \frac{1}{2}<\alpha\leq 1, \beta<0).  \]

\end{enumerate}
\end{example}

The five parts of the next example are obtained by taking $p(z)=z^{2}f'(z)/f^{2}(z)$ in Theorems ~\ref{p6thm3.1}, ~\ref{p6thm3.2} and ~\ref{p6thm3.4} respectively.

\begin{example}\label{p6eg3.3}
Let $f\in \mathcal{A}$ and $0<\alpha\leq1$. The following are the sufficient conditions for the function $z^{2}f'(z)/f^{2}(z)\prec((1+z)/(1-z))^{\alpha}:$
\begin{enumerate}
\item The function $f$ satisfies
  \[ 1+\beta\left(\frac{(zf(z))''}{f'(z)}-\frac{2zf'(z)}{f(z)}\right)\prec \varphi_{CAR}(z)\quad \Big(|\beta|\geq \frac{1}{\alpha}\sqrt{\frac{4\sqrt{3}+8}{3\sqrt{3}}}\simeq \frac{1.6947}{\alpha} \Big). \]
\item The function $f$ satisfies the subordination
  \[ 1+\beta \frac{f^{2}(z)}{z^{2}f'(z)} \left(\frac{(zf(z))''}{f'(z)}-\frac{2zf'(z)}{f(z)}\right)\prec \varphi_{CAR}(z).\]
   where $0<\alpha<1$ and $\beta>0$ satisfy
   \begin{equation*}
\begin{split}
  9\alpha^{2}\beta^{2}\Big(\frac{1-\alpha}{1+\alpha} \Big)^{\alpha}(1-\alpha^{2})^{-2}&\Big(-8+\alpha^{2}\Big(8+3\beta^{2}\Big(\frac{1-\alpha}{1+\alpha} \Big)^{\alpha} \Big) \Big)\\
  &-64\alpha\beta(1-\alpha^{2})^{-\frac{1}{2}}\Big(\sqrt{\frac{1-\alpha}{1+\alpha}} \Big)^{\alpha}\sin \big(\frac{\alpha\pi}{2} \big)\geq 16.
 \end{split}
   \end{equation*}

   \item The function $f$ satisfies
  \[\frac{z^{2}f'(z)}{f^{2}(z)} +\beta\frac{z^{2}f'(z)}{f^{2}(z)}\left(\frac{(zf(z))''}{f'(z)}-\frac{2zf'(z)}{f(z)}\right)\prec \Big(\frac{1+z}{1-z}\Big)^{\alpha}\quad (\beta>0). \]

   \item The function $f$ satisfies
  \[\frac{z^{2}f'(z)}{f^{2}(z)} +\beta\left(\frac{(zf(z))''}{f'(z)}-\frac{2zf'(z)}{f(z)}\right)\prec \Big(\frac{1+z}{1-z}\Big)^{\alpha}\quad (\beta>0). \]

\item Either $0<\alpha<1/2, \beta>0$ or $1/2<\alpha\leq 1, \beta<0$ and the function $f$ satisfies the subordination
  \[ \frac{z^{2}f'(z)}{f^{2}(z)}+\beta \frac{f^{2}(z)}{z^{2}f'(z)} \left(\frac{(zf(z))''}{f'(z)}-\frac{2zf'(z)}{f(z)}\right)\prec \Big(\frac{1+z}{1-z}\Big)^{\alpha}.  \]

\end{enumerate}
\end{example}

Let $p$ be an analytic function in $\mathbb{D}$ with $p(0)=1$. In the next result, condition on $\beta$ is obtained so that the subordination $p(z)+\beta zp'(z) \prec \varphi_{CAR}(z)$ implies $p(z)\prec (1+z)/(1-z)$.

\begin{theorem}\label{p6thm3.5}Let $p$ be an analytic function defined on $\mathbb{D}$ with $p(0)=1$ satisfying
\[ p(z)+\beta zp'(z) \prec \varphi_{CAR}(z) \quad\text{for}\quad \beta \geq 0 .\]
Then $p(z)\prec (1+z)/(1-z)$.
\end{theorem}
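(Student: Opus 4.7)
The plan is to invoke Lemma~\ref{p6int3} with the convex dominant $q(z) = (1+z)/(1-z)$ and the constant function $\phi(w) \equiv \beta$; all hypotheses on $q$ and $\phi$ are automatic, with $\RE \phi(q(z)) = \beta$. The only non-trivial task is to upgrade the given subordination $p(z) + \beta z p'(z) \prec \varphi_{CAR}(z)$ to $p(z) + \beta z p'(z) \prec q(z)$; since $q$ is univalent, transitivity of subordination reduces this to verifying the set inclusion $\varphi_{CAR}(\mathbb{D}) \subset q(\mathbb{D}) = \{w : \RE w > 0\}$.

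To prove the inclusion, I would analyse the harmonic, non-constant function $u(z) := \RE \varphi_{CAR}(z) = 1 + (4/3)\RE z + (2/3)\RE z^2$. On the boundary $z = e^{it}$ one has $u(e^{it}) = 1 + (4/3)\cos t + (2/3)\cos 2t$, whose derivative $-(4/3)\sin t\,(1 + 2\cos t)$ vanishes precisely at $t \in \{0, \pi, \pm 2\pi/3\}$; a direct evaluation shows the boundary minimum of $u$ is $0$, attained only at $t = \pm 2\pi/3$. The strong minimum principle for non-constant harmonic functions then forces $u(z) > 0$ throughout $\mathbb{D}$, yielding the desired inclusion.

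With the inclusion in hand, the hypothesis becomes $p(z) + \beta z p'(z) \prec q(z)$. When $\beta = 0$ this is already the conclusion $p \prec q$; when $\beta > 0$, the strict positivity $\RE \phi(q(z)) = \beta > 0$ combined with the convexity of $q$ allows Lemma~\ref{p6int3} to deliver $p \prec q$, which is precisely $p(z) \prec (1+z)/(1-z)$. The main obstacle is the harmonic analysis of $u$: since $u$ already attains the value $0$ on $\partial \mathbb{D}$, a crude non-negativity argument is insufficient, and one must invoke the strict minimum principle (noting that the extremizers $e^{\pm 2\pi i/3}$ lie on the boundary, not inside) to obtain strict inequality in the open disk.
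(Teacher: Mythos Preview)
Your argument is correct, and it follows a genuinely different route from the paper's.  The paper invokes Lemma~\ref{p6int2} with $\nu(w)=w$, $\varphi(w)=\beta$, $q(z)=(1+z)/(1-z)$, and is then obliged to verify the subordination
\[
\varphi_{CAR}(z)\prec h(z)=\frac{1+z}{1-z}+\frac{2\beta z}{(1-z)^{2}},
\]
which it does via a boundary analysis of the cardioid condition $|\sqrt{6w-2}-2|\ge 2$, ultimately reducing to checking that a rational function $f(x)$ in $x=\sin(t/2)$ is nonnegative on $[0,1]$.  By contrast, you appeal to Lemma~\ref{p6int3}, so the only subordination you need is $\varphi_{CAR}\prec q$; since $q(\mathbb{D})$ is the right half-plane this collapses to the elementary, $\beta$-independent fact that $\RE\varphi_{CAR}(z)>0$ on $\mathbb{D}$, which your harmonic minimum-principle argument handles cleanly (indeed, this essentially re-derives the $\alpha=1$ case of Theorem~\ref{p6thm3.4}(a) and then feeds $\varphi_{CAR}\prec q$ into it).  Your approach is shorter and avoids the cardioid algebra entirely; the paper's approach, on the other hand, parallels the structure of the surrounding theorems (\ref{p6thm3.1}, \ref{p6thm3.2}, \ref{p6thm3.3}) and makes the role of the boundary of $h(\mathbb{D})$ explicit, which is the template used when the target dominant is not simply a half-plane.
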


\begin{proof}
Obviously, the result hold for $\beta=0$. Let us assume that $\beta >0$. Define the function $q:\mathbb{D}\to\mathbb{C}$ by $q(z)=(1+z)/(1-z)$. Also define $\nu(w):=w$ and $\varphi(w):=\beta$. Note that the functions $\nu$ and $\varphi$ are analytic in $\mathbb{C}$ and $\varphi(w)\neq 0$. Consider the functions $Q$ and $h$ defined as follows:
\[Q(z):=zq'(z)\varphi(q(z))=\beta zq'(z)=\frac{2\beta z}{(1-z)^{2}} \]
and
\[h(z):=\nu(q(z))+Q(z)=q(z)+Q(z). \]
The function $q$ is convex in $\mathbb{D}$ and therefore the function $Q$ is starlike univalent in $\mathbb{D}$ which further implies that
\[\RE\Big(\frac{zh'(z)}{Q(z)}\Big)=\frac{1}{\beta}+\RE\Big(\frac{zQ'(z)}{Q(z)}\Big)>0.\]
Since all the conditions of Lemma~\ref{p6int2} are fulfilled, it follows that $p(z)\prec q(z)$. We complete the proof by showing that
\begin{equation}\label{p6eqn3.9}
\varphi_{CAR}(z):=1+\frac{4z}{3}+\frac{2z^{2}}{3}\prec q(z)+\beta zq'(z)= \frac{1+z}{1-z}+\frac{2\beta z}{(1-z)^{2}}:=h(z).
\end{equation}
Proceeding as in Theorem ~\ref{p6thm3.1} and by using the definition of $h$ as given in ~\eqref{p6eqn3.9}, the subordination $\varphi_{CAR}(z)\prec h(z)$ holds if the following condition holds:
\[\left|\sqrt{-2+\frac{12\beta e^{i t}}{(1-e^{i t})^{2}}+\frac{6(1+e^{i t})}{1-e^{i t}}}-2\right|\geq 2 \quad (-\pi\leq t\leq \pi). \]
Set
\[w=u+iv=-2+\frac{12\beta e^{i t}}{(1-e^{i t})^{2}}+\frac{6(1+e^{i t})}{1-e^{i t}}\]
so that
\[ u=-2 - \frac{3 \beta}{\sin^{2}(t/2)} \quad\text{and}\quad v=\frac{6\cos (t/2)}{\sin (t/2)}.\]
 Then, substituting the values of $u$ and $v$ in ~\eqref{p6eqn3.3}, we need to prove that, for $-\pi\leq t\leq \pi$
\[g(t):=\frac{-768}{2\sin^{2}t/2}(6+5\beta+3\cos t) + \frac{432}{\sin^{4}t/2}(3+\beta(6+\beta))  +\frac{648\beta^{2}(1+\beta)}{\sin^{6}t/2}+\frac{81\beta^{4}}{\sin^{8}t/2} \geq 0.\]
Observe that $g(t)=g(-t)$ and after substituting $x=\sin(t/2)$, we see that the above inequality holds if $f(x)\geq 0$ for all $x$ with $0\leq x \leq 1$, where
\[f(x)=\frac{3}{x^8} \big(27\beta^{4}+216\beta^{2}(1+\beta)x^{2}+144(3+\beta(6+\beta))x^{4}+128(-9-5\beta+6x^{2})x^{6}\big).\]
 A simple computation shows that the function $f$ is decreasing and therefore, $f(x)\geq f(1)=3(6+\beta)(2+3\beta)^{3} >0$  for $\beta>0$. Hence, $p(z)\prec q(z)$.
\end{proof}

The three parts of the next example are obtained by taking $p(z)=zf'(z)/f(z)$, $p(z)=f'(z)$ and $p(z)=z^{2}f'(z)/f^{2}(z)$ in Theorem ~\ref{p6thm3.5} respectively.

\begin{example}\label{p6eg3.3a}
Let $f\in \mathcal{A}$ and $\beta\geq 0$.
\begin{enumerate}
\item If the function $f$ satisfies the subordination
  \[\frac{zf'(z)}{f(z)}+\frac{\beta zf'(z)}{f(z)}\left(1+\frac{zf''(z)}{f'(z)}-\frac{zf'(z)}{f(z)}\right)\prec \varphi_{CAR}(z), \]
  then $f\in \mathcal{S}^*$.
\item If the function $f$ satisfies the subordination
  \[ f'(z)+ \beta z f''(z)\prec \varphi_{CAR}(z),\]
  then $f'(z)\in \mathcal{P}$.
\item If the function $f$ satisfies the subordination
  \[ \frac{z^{2}f'(z)}{f^{2}(z)}+\frac{\beta z^{2}f'(z)}{f^{2}(z)}\left(\frac{(zf(z))''}{f'(z)}-\frac{2zf'(z)}{f(z)}\right)\prec \varphi_{CAR}(z),\]
  then $z^{2}f'(z)/f^{2}(z)\in \mathcal{P}$.

\end{enumerate}
\end{example}

Let $p$ be an analytic function in $\mathbb{D}$ with $p(0)=1$. In the next result, condition on $\beta$ is obtained so that the subordination
\[ p(z)+\beta \frac{zp'(z)}{p^{2}(z)} \prec \varphi_{CAR}(z)  \]
implies $p(z)\prec (1+z)/(1-z)$.

\begin{theorem}\label{p6thm3.3}Let $p$ be an analytic function defined on $\mathbb{D}$ with $p(0)=1$ satisfying
\[ p(z)+\beta \frac{zp'(z)}{p^{2}(z)} \prec \varphi_{CAR}(z) \quad\text{for}\quad \beta \leq 0. \]
Then $p(z)\prec (1+z)/(1-z)$.
\end{theorem}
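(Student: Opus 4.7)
Since $p(z) \prec (1+z)/(1-z)$ is exactly $\RE p(z) > 0$, I would reduce to the Miller--Mocanu admissibility lemma (Lemma~\ref{p6lem1}) rather than the subordination chain used in Theorem~\ref{p6thm3.5}. The reason the latter is unavailable here is that with the natural choices $q(z) = (1+z)/(1-z)$, $\nu(w) = w$, $\varphi(w) = \beta/w^{2}$, the quantity
\[
\RE\frac{zh'(z)}{Q(z)} = \RE\frac{q^{2}(z)}{\beta} + \RE\frac{1-z}{1+z}
\]
fails to be positive for $\beta < 0$ (e.g.\ it tends to $-\infty$ as $z \to 1$ along the real axis), so Lemma~\ref{p6int2} does not apply; symmetrically, $\RE\phi(q(z)) = \RE(\beta/q^{2}(z))$ is negative at $z=0$, ruling out Lemma~\ref{p6int3}. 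Set
\[
\psi(r, s, t; z) := r + \frac{\beta s}{r^{2}},
\]
so the hypothesis reads $\psi(p(z), zp'(z), z^{2}p''(z); z) \in \Omega$ with $\Omega := \varphi_{CAR}(\mathbb{D})$. Analyticity of $p + \beta zp'/p^{2}$ forces $p \neq 0$ on $\mathbb{D}$, so the degenerate case $\rho = 0$ does not arise in the admissibility test.

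The only substantive step is to locate $\Omega$ in the right half-plane. Writing $z = x + iy \in \mathbb{D}$ with $y^{2} < 1 - x^{2}$,
\[
\RE\varphi_{CAR}(z) = 1 + \frac{4x}{3} + \frac{2(x^{2}-y^{2})}{3} > 1 + \frac{4x}{3} + \frac{2x^{2}}{3} - \frac{2(1-x^{2})}{3} = \frac{(1+2x)^{2}}{3} \geq 0.
\]
With this in hand, the admissibility check is immediate: for real $\rho \neq 0$, $\sigma \leq -(1+\rho^{2})/2$ and $\mu + \sigma \leq 0$,
\[
\psi(i\rho, \sigma, \mu + i\nu; z) = i\rho + \frac{\beta\sigma}{(i\rho)^{2}} = -\frac{\beta\sigma}{\rho^{2}} + i\rho,
\]
and because $\beta \leq 0$ together with $\sigma < 0$ gives $\beta\sigma \geq 0$, the real part $-\beta\sigma/\rho^{2} \leq 0$; combined with the previous display this forces $\psi(i\rho, \sigma, \mu+i\nu; z) \notin \Omega$. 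Lemma~\ref{p6lem1} with $n = 1$ then yields $\RE p(z) > 0$ on $\mathbb{D}$, i.e.\ $p \prec (1+z)/(1-z)$.

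The main obstacle is the conceptual pivot: recognizing that the subordination approach of Theorem~\ref{p6thm3.5} genuinely fails for $\beta \leq 0$, and switching instead to a second-order admissibility argument for the right half-plane. Once that switch is made, the rest reduces to the elementary real-part estimate for $\varphi_{CAR}$ above and a one-line sign check, the latter also making transparent why $\beta \leq 0$ is precisely the condition needed (the signs of $\beta$ and $\sigma$ must align so that $-\beta\sigma/\rho^{2} \leq 0$).
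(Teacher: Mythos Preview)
Your argument via Lemma~\ref{p6lem1} is correct, and your diagnosis of the Lemma~\ref{p6int2} route is on target. In fact the paper attempts exactly the approach you rule out: with $q(z)=(1+z)/(1-z)$, $\nu(w)=w$, $\varphi(w)=\beta/w^{2}$ it verifies starlikeness of $Q$, then claims $\RE(zh'(z)/Q(z))>0$ by computing the boundary values $\RE H(e^{it})=-\cot^{2}(t/2)/\beta>0$ and invoking the minimum principle for harmonic functions, after which most of the proof is devoted to establishing $\varphi_{CAR}\prec h$ through a boundary calculation. Your objection is decisive: along $z=r\to 1^{-}$ one has $q(r)^{2}/\beta\to-\infty$, so $\RE H$ is unbounded below on $\mathbb{D}$ (indeed $H(0)=1+1/\beta<0$ already for $-1<\beta<0$), and the minimum-principle step fails because $H$ is singular at $z=1$. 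Your admissibility argument is therefore not merely an alternative but a repair, and it is also much shorter, since the inclusion $\varphi_{CAR}(\mathbb{D})\subset\{\RE w>0\}$ together with the sign check $\RE\psi(i\rho,\sigma,\cdot\,;\cdot)=-\beta\sigma/\rho^{2}\le 0$ replaces the entire $\varphi_{CAR}\prec h$ computation. One small point worth making explicit: because $\psi(r,s,t;z)=r+\beta s/r^{2}$ is undefined at $r=0$, you are implicitly using the form of the admissibility lemma that permits $\psi$ to have a restricted domain (Miller--Mocanu, Definition~2.3a, rather than the simplified statement quoted as Lemma~\ref{p6lem1}); your observation that analyticity of $p+\beta zp'/p^{2}$ forces $p\neq 0$ indeed ensures the boundary case $\rho=0$ never arises in the contrapositive argument, so this is only a matter of citation.
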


\begin{proof}
For $\beta=0$, result hold obviously. Let us assume that $\beta <0$. Define the function $q:\mathbb{D}\to\mathbb{C}$ by $q(z)=(1+z)/(1-z)$. Also define the functions $\nu$ and $\varphi$ by $\nu(w)=w$ and $\varphi(w)=\beta/w^{2}$. Clearly, the functions $\nu$ is analytic in $\mathbb{C}$, $\varphi$ is analytic in $\mathbb{C}\setminus\{0\}$ and $\varphi(w)\neq 0$. Consider the functions $Q$ and $h$ defined as follows:
\[Q(z):=zq'(z)\varphi(q(z))=\frac{\beta zq'(z)}{q^{2}(z)}=\frac{2\beta z}{(1+z)^{2}} \]
and
\[h(z):=\nu(q(z))+Q(z)=q(z)+Q(z). \]
Observe that
\[\RE\Big(\frac{z Q'(z)}{Q(z)}\Big)=\RE\Big(\frac{1-z}{1+z}\Big)>0.   \]
This shows that $Q(z)$ is starlike univalent in $\mathbb{D}$. A computation shows that
\[\frac{zh'(z)}{Q(z)}=\frac{q^{2}(z)}{\beta}+\frac{zQ'(z)}{Q(z)}=\frac{1}{\beta}\left(\frac{1+z}{1-z}\right)^{2} +\frac{1-z}{1+z}=: H(z)\]
and
\[\RE (H(e^{i t}))= -(\cot^{2}(t/2))/\beta >0. \]
This condition together with the minimum principle for harmonic functions shows that $\RE(zh'(z)/Q(z))>0$. Since all the conditions of Lemma~\ref{p6int2} are fulfilled, it follows that $p(z)\prec q(z)$. We complete the proof by showing that
\[\varphi_{CAR}(z):=1+\frac{4z}{3}+\frac{2z^{2}}{3}\prec q(z)+\beta \frac{zq'(z)}{q^{2}(z)}= \frac{1+z}{1-z}+\frac{2\beta z}{(1+z)^{2}}:=h(z). \]
The subordination $\varphi_{CAR}(z) \prec h(z)$ holds if the following condition holds:
\[\left|\sqrt{-2+\frac{12\beta e^{i t}}{(1+e^{i t})^{2}}+\frac{6(1+e^{i t})}{1-e^{i t}}}-2\right|\geq 2 \quad (-\pi\leq t\leq \pi). \]
Set
\[w=u+iv=-2+\frac{12\beta e^{i t}}{(1+e^{i t})^{2}}+\frac{6(1+e^{i t})}{1-e^{i t}}\]
so that
\[ u=-2 + \frac{3 \beta}{\cos^{2}(t/2)} \quad\text{and}\quad v=\frac{6\cos (t/2)}{\sin (t/2)}.\]
 Then, substituting these values of $u$ and $v$, we see that the inequality ~\eqref{p6eqn3.3} is equivalent to
\begin{equation*}
\begin{split}
3 \Big(\frac{27 \beta^3}{\cos ^8(t/2)}& (\beta-8 \cos^{2} (t/2))+\frac{8 \beta}{\cos^{2} (t/2)} \left(\frac{45 \beta }{\cos^{2} (t/2)}+27\beta -28\right)\\
+&\frac{72}{\sin ^2(t/2)} (3 (\beta-4) \beta-16) +\frac{432}{\sin ^4(t/2)} +768\Big) \geq 0\quad (-\pi\leq t\leq \pi).
\end{split}
\end{equation*}
Substituting $x=\cos(t/2)$, the above inequality becomes $f(x)\geq 0$ for all $x$ with $0\leq x \leq 1$, where
\begin{equation*}
\begin{split}
f(x)=&\frac{1}{x^8 \left(1-x^{2}\right)^2} \Big(27 \beta^4 \left(1-x^{2}\right)^2-216 \beta^3 \left(1-x^{2}\right)^2 x^2+72 \beta^2 \left(2 x^4-7 x^2+5\right) x^4\\
&-32 \beta (1-x^{2}) \left(20 x^2+7\right) x^6+48 \left(1-4 x^2\right)^2 x^8\Big).
\end{split}
\end{equation*}
Since $g(x):=2 x^4-7 x^2+5\geq g(1)=0,$ it follows that $f(x)\geq 0$ for $\beta<0$. Hence, $p(z)\prec q(z)$.
\end{proof}

The three parts of the next example are obtained by taking $p(z)=zf'(z)/f(z)$, $p(z)=f'(z)$ and $p(z)=z^{2}f'(z)/f^{2}(z)$ in Theorem ~\ref{p6thm3.3} respectively.

\begin{example}\label{p6eg3.3b}
Let $f\in \mathcal{A}$ and $\beta\leq 0$.
\begin{enumerate}
\item If the function $f$ satisfies the subordination
  \[\frac{zf'(z)}{f(z)}+\frac{\beta f(z)}{zf'(z)}\left(1+\frac{zf''(z)}{f'(z)}-\frac{zf'(z)}{f(z)}\right)\prec \varphi_{CAR}(z), \]
  then $f\in \mathcal{S}^*$.
\item If the function $f$ satisfies the subordination
  \[ f'(z)+\frac{\beta z f''(z)}{(f'(z))^{2}}\prec \varphi_{CAR}(z),\]
  then $f'(z)\in \mathcal{P}$.
\item If the function $f$ satisfies the subordination
  \[ \frac{z^{2}f'(z)}{f^{2}(z)}+\frac{\beta f^{2}(z)}{z^{2}f'(z)}\left(\frac{(zf(z))''}{f'(z)}-\frac{2zf'(z)}{f(z)}\right)\prec \varphi_{CAR}(z),\]
  then $z^{2}f'(z)/f^{2}(z)\in \mathcal{P}$.
\end{enumerate}
\end{example}

\section*{Acknowledgement}The second author was supported by the Basic Science Research Program through the National Research Foundation of Korea (NRF) funded by the Ministry of Education, Science, and Technology (No. 2019R1I1A3A01050861).

\end{document}